\theoremstyle{definition}
\newtheorem{defn}{Definition}[section]
\theoremstyle{plain}
\newtheorem{thm}{Theorem}[section]
\newtheorem*{thm*}{Theorem}
\newtheorem{prop}[thm]{Proposition}
\newtheorem{lem}[thm]{Lemma}
\newtheorem*{maintheorem}{Main Theorem}
\newtheorem*{mainthm'}{Main Theorem'}
\theoremstyle{remark}
\newcommand{\mbbt}{\mathbb{T}}
\newcommand{\mclc}{\mathcal{C}}
\newcommand{\mcle}{\mathcal{E}}
\newcommand{\mclh}{\mathcal{H}}
\newcommand{\mcli}{\mathcal{I}}
\newcommand{\mclm}{\mathcal{M}}
\newcommand{\mcls}{\mathcal{S}}
\newcommand{\mscp}{\mathscr{P}}
\newcommand{\mscs}{\mathscr{S}}
\newcommand{\hB}{\hat{B}}
\newcommand{\hK}{\hat{K}}
\newcommand{\whT}{\widehat{T}}
\newcommand{\dif}{\:\!\mathrm{d}}
\newcommand{\supp}{\mathrm{supp}}
\DeclareMathOperator{\dist}{dist} %distance
\title[Zero Entropy Optimization]{Low complexity of optimizing measures over an expanding circle map}
\author{Rui Gao}
\address{Rui Gao: College of Mathematics, Sichuan University, Chengdu 610064, China}
\email{gaoruimath@scu.edu.cn}
\author{Weixiao Shen}
\address{Weixiao Shen: Shanghai Center for Mathematical Sciences, Jiangwan Campus, Fudan University, No
2005 Songhu Road, Shanghai 200438, China}
\email{wxshen@fudan.edu.cn}
\begin{document}

\maketitle

\begin{abstract}
In this paper, we prove that for real analytic expanding circle maps, all optimizing measures of a real analytic potential function have zero entropy, unless the potential is cohomologous to constant. We use the group structure of the symbolic space to solve a transversality problem involved. We also discuss applications to optimizing measures for generic smooth potentials and to Lyapunov optimizing measures.
\end{abstract}
\section{Introduction}
Given a continuous map $T: X\to X$ from a compact metric space into itself and a continuous function $f:X\to \mathbb{R}$, the problem of {\em ergodic optimization} looks for maximization/minimization of $\int f \dif\mu$, where $\mu$ runs over
the collection $\mathcal{M}(T)$ of all $T$-invariant Borel probability measures. Let
\begin{equation}\label{eqn:betaalpha}
\beta(T,f)=\beta(f)=\sup_{\mu\in\mathcal{M}(T)} \int f \dif\mu, \quad
\alpha(T,f)=\alpha(f)=\inf_{\mu \in \mathcal{M}(T)}\int f \dif\mu.
\end{equation}
%\begin{equation}\label{eqn:maxm}
%\beta(T,f)=\beta(f)=\sup_{\mu\in\mathcal{M}(T)} \int f \dif\mu,\,\,\,\,
%\alpha(T,f)=\alpha(f)=\inf_{\mu \in \mathcal{M}(T)}\int f \dif\mu.
%\end{equation}
By Birkhoff's Ergodic Theorem, these quantities also optimize the time average of $f$ along orbits.
%$\beta(f)$ can be equivalently expressed as
%\begin{equation} \label{eqn:maxo}
%\beta(f)=\sup_{x\in X} \limsup_{n\to\infty}\frac{1}{n} \sum_{i=0}^{n-1}f(T^i(x)).
%\end{equation}

A measure $\mu\in\mathcal{M}(T)$ is called a {\em maximizing measure} if $\beta(f)=\int f \dif\mu$ and an orbit $\{T^n(x)\}_{n=0}^{\infty}$  is called a {\em maximizing orbit} if $\beta(f)=\lim_n \frac{1}{n} \sum_{i=0}^{n-1} f(T^i(x)).$
Minimizing measures/orbits are similarly defined. As $-\alpha(f)=\beta(-f)$, the problems of maximization and minimization are equivalent. In this paper, we shall  mainly discuss the maximizing problem.

Numerical experiments  by Hunt-Ott in \cite{HO96L, HO96E} indicate that typically maximizing orbits are periodic and maximizing measures are supported on periodic orbits or at least of zero entropy.
The mathematical theory of ergodic optimization has been developed since then, mainly in the case that $T$ is hyperbolic.  Yuan-Hunt~\cite{YH96} dealt with the case that $T$ is either an Axiom A diffeomorphism or a non-invertible uniformly expanding map, and $f$ is Lipschitz, and proved that only periodic orbits (measures) can be persistently maximal in the Lipschitz topology. Building upon  \cite{YH96, BQ07, Mor08,QS12},
Contreras~\cite{Con16} proved that for $T$ uniformly expanding, a generic $f$ has a unique maximizing measure which is supported on a periodic orbit in the Lipschitz topology. Recently, Huang et al~\cite{HLMXZ19} extended the results and proved that when  $T$ is either uniformly expanding or Axiom A, for a generic $f$ in the $C^1$ topology, the maximizing measure is unique and supported on a periodic orbit. However, just as in many other circumstances in the  state-of-the-art theory of dynamical systems, the local perturbation technique is tied to the $C^1$ (or coarser) topology.

The simplest uniformly expanding maps are probably the maps $T: \mathbb{R}/\mathbb{Z}\to \mathbb{R}/\mathbb{Z}$, $x\mapsto dx \mod 1$ with $d\ge 2$. This case has received much attention. When $d=2$, for $f_\theta(x)=\cos (2\pi (x-\theta))$, $\theta\in \mathbb{R}$, Bousch~\cite{Bou00} proved that the maximizing measure of $f_\theta$ is unique and is a Sturmian measure which, in particular, has entropy zero. See~\cite{Jen07, Jen08, Gao21} for similar results in this direction.

We refer to ~\cite{Jen06, Boc18, Jen19} for development on other aspects of ergodic optimization.

The goal of this paper is to prove the following theorem.

\begin{maintheorem}
Let $T:\mathbb{R}/\mathbb{Z}\to \mathbb{R}/\mathbb{Z}$ be a real analytic expanding map and let $f:\mathbb{R}/\mathbb{Z}\to\mathbb{R}$ be a real analytic function. Then
 \begin{itemize}
   \item either $f$ is real analytically cohomologous to constant, i.e. there exists a real analytic function $g$ and a constant $c$ such that $f=g\circ T-g+c$;
   \item or any optimizing measure of $f$ is of zero entropy.
 \end{itemize}
\end{maintheorem}

In particular, the Main Theorem gives a complete solution to a problem raised by O. Jenkinson, see \cite[Problem~3.12]{Jen06}, which was also mentioned in \cite[page 1847]{Boc18}.

Morris~\cite{Mor08} proved that for generic H\"{o}lder or Lipschitz functions, there is a unique maximizing measure and this measure has entropy zero. This result played an important role in the work \cite{Con16} cited above which proves the outstanding TPO conjecture (TPO means ``typically maximizing measures are supported on a periodic orbit'') in the Lipschitz topology. Our Main Theorem implies that Morris' result remains true for $C^r$ functions for any $r\in\{0,1,2,\ldots,\infty\}$, see Theorem~\ref{thm:Cr}. It is our hope that our Main Theorem will throw some insight in proving the TPO conjecture in $C^r$ topology with $r\ge 2$ in the case that the underlying dynamics is an expanding circle map.

%a similar result was proved by Morris in \cite[Theorem~2]{Morris} which
%Another survey on ergodic optimization is \cite{Jen19}.

One of the key ingredients in our proof is a transversality result. To reduce the technicality, we shall only state a special version that is needed for the proof of the Main Theorem here. See Theorem~\ref{thm:C1} for the general statement. Let $T:\mathbb{R}/\mathbb{Z}\to \mathbb{R}/\mathbb{Z}$ and $f:\mathbb{R}/\mathbb{Z}\to \mathbb{R}$ be as in the Main Theorem. We assume that $T$ is orientation preserving and that $T$ fixes $0$.
Let $\whT:\mathbb{R}\to\mathbb{R}$ denote the unique lift of $T$ with $\whT(0)=0$, via the natural projection $\pi:\mathbb{R}\to \mathbb{R}/\mathbb{Z}$. Let $\tau=\whT^{-1}$ and $\tau_i(x)=\tau(x+i)$
for each $i\in \mathbb{Z}$. Let $\widehat{f}=f\circ \pi$.
%:\mathbb{R}\to \mathbb{R}$ be the $1$-periodic function
%Given $\alpha\in (0,1]$, let $f:\mathbb{R}\to\mathbb{C}$ be a 1-periodic and $\alpha$-H\"{o}lder function. Let $\lambda_0=\min_{x\in\mathbb{R}} %|\whT'(x)|$ and fix $\lambda\in \mathbb{C}$ such that
%$$0<|\lambda|<\lambda_0^\alpha.$$
Let $\mathbb{N}=\{1,2,\ldots\}$. For each $\mathbf{i}=(i_n)_{n\ge 1}\in \Sigma_d:=\{0,1,\ldots, d-1\}^{\mathbb{N}},$ consider %\textcolor{red}{(introducing the notations here or referring to \eqref{eqn:hi} (and \eqref{eq:tau_i})???)}
$$h_{\mathbf{i}}(x):= \sum_{n=1}^\infty \left(\widehat{f}\circ \tau_{i_n}\circ \tau_{i_{n-1}}\circ\cdots\circ \tau_{i_1}(x)-\widehat{f}\circ \tau_{i_n}\circ \tau_{i_{n-1}}\circ\cdots\circ \tau_{i_1}(0)\right), \quad x\in\mathbb{R},$$
which is well-defined and real analytic on $\mathbb{R}$.
%Let $\mathbf{0}$ denote the sequence in $\Sigma_d$ whose entries are all $0$, so that
%\[
%h_{\mathbf{0}}=\sum_{n=1}^\infty (f\circ \tau^n -f\circ\tau^n(0)).
%\]
%\[
%h_{\mathbf{0}}=\sum_{n=1}^\infty \lambda^n (f\circ \tau^n -f\circ\tau^n(0)) \quad\text{and hence}\quad f=\lambda^{-1} h_{\mathbf{0}}\circ\whT - h_{\mathbf{0}}+f(0).
%\]

\begin{thm}[Transversality]~\label{thm:transversal}
Under the above circumstances, we have
\begin{itemize}
\item either $f$ is real analytically cohomologous to constant and $h_{\mathbf{i}}\equiv h_{\mathbf{j}}$ holds for all $\mathbf{i},\mathbf{j}\in \Sigma_d$;
\item or $h_{\mathbf{i}}\not\equiv h_{\mathbf{j}}$ holds whenever $\mathbf{i},\mathbf{j}\in \Sigma_d$ and $\mathbf{i}\not=\mathbf{j}$.
\end{itemize}
%Moreover, (1) holds if and only if $h_{\mathbf{0}}$ is $1$-periodic and $f=\lambda^{-1} h_{\mathbf{0}}\circ\whT - h_{\mathbf{0}}+f(0)$.
\end{thm}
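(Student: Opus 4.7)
The plan is to prove the dichotomy in both directions; the reverse direction (deducing cohomology from any coincidence $h_{\mathbf{i}}\equiv h_{\mathbf{j}}$) is the ``transversality problem'' alluded to in the abstract and uses the group structure of $\Sigma_d\cong\mathbb{Z}_d$. The easy direction is a telescoping: if $\widehat{f}=\widehat{g}\circ\widehat{T}-\widehat{g}+c$ with $\widehat{g}$ a real analytic $1$-periodic lift of $g$, then $\widehat{T}\circ\tau_{i_n}(y)=y+i_n$ combined with $1$-periodicity of $\widehat{g}$ makes the partial sum telescope:
$$\sum_{n=1}^{N}\widehat{f}\circ\tau_{i_n}\circ\cdots\circ\tau_{i_1}(x) = \widehat{g}(x) - \widehat{g}\bigl(\tau_{i_N}\circ\cdots\circ\tau_{i_1}(x)\bigr) + Nc.$$
Subtracting the $x=0$ version and letting $N\to\infty$ (by uniform exponential contraction of the composition) gives $h_{\mathbf{i}}(x)=\widehat{g}(x)-\widehat{g}(0)$, independent of $\mathbf{i}$.

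For the reverse direction I first derive the fundamental recursion
$$h_{k\mathbf{a}}(x) = \widehat{f}(\tau_k(x))-\widehat{f}(\tau_k(0)) + h_{\mathbf{a}}(\tau_k(x))-h_{\mathbf{a}}(\tau_k(0))$$
by peeling off the first term of the series. Two structural facts follow for the closed equivalence relation $\sim$ defined by $h_{\mathbf{i}}\equiv h_{\mathbf{j}}$: (i) if $i_1=j_1=k$ and $\mathbf{i}\sim\mathbf{j}$, then $h_{\sigma\mathbf{i}}-h_{\sigma\mathbf{j}}$ is constant on $\tau_k(\mathbb{R})$, hence identically zero by real-analytic continuation and the normalization $h_{\mathbf{c}}(0)=0$, so $\sim$ is shift-invariant whenever first symbols agree; and (ii) a careful tracking of $d$-adic carries, using the $1$-periodicity of $\widehat{f}$ to absorb the leftover $+1$'s produced by carrying, yields $h_{\mathbf{i}}(x+1)=h_{\mathbf{i}+1}(x)+h_{\mathbf{i}}(1)$ where $+1$ denotes $\mathbb{Z}_d$-addition on sequences. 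Consequence (ii) makes $\sim$ invariant under the $\mathbb{Z}_d$-action, so $H:=\{\mathbf{k}:h_{\mathbf{k}}\equiv h_{\mathbf{0}}\}$ is a closed subgroup of $\mathbb{Z}_d$ with $\mathbf{i}\sim\mathbf{j}\Leftrightarrow\mathbf{i}-\mathbf{j}\in H$, and consequence (i) translates to the algebraic constraint $H\cap d\mathbb{Z}_d=dH$.

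The main obstacle is now to show $H\in\{\{0\},\mathbb{Z}_d\}$. Closed subgroups of $\mathbb{Z}_d\cong\prod_{p\mid d}\mathbb{Z}_p$ are products $\prod p^{k_p}\mathbb{Z}_p$, and $H\cap d\mathbb{Z}_d=dH$ forces each $\mathbb{Z}_p$-factor of $H$ to be either $\{0\}$ or $\mathbb{Z}_p$; for prime $d$ this already gives the dichotomy. For composite $d$ the remaining ``mixed'' subgroups must be eliminated using the translation identity
$$F_{\sigma\mathbf{i}}(y)-F_{\sigma\mathbf{j}}(y+s)=\mathrm{const},\qquad F_{\mathbf{a}}(y):=\widehat{f}(\tau(y))+h_{\mathbf{a}}(\tau(y)),\ s=j_1-i_1\ne 0,$$
obtained from the recursion in the complementary case $i_1\ne j_1$ (via the rewriting $h_{k\mathbf{a}}(x)=F_{\mathbf{a}}(x+k)-F_{\mathbf{a}}(k)$). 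The plan is to combine this identity, which is a genuine translation symmetry in the real variable $y$, with real analyticity of all objects to derive enough rigidity to break the mixed-subgroup scenario; this is where most of the real work lies and where the interplay between the analytic structure on $\mathbb{R}$ and the profinite group structure on $\Sigma_d$ is decisive.

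Once $H=\mathbb{Z}_d$, every $h_{\mathbf{i}}$ equals a common real analytic function $g$. Setting $x=\widehat{T}(y)$ for $y$ in the range of $\tau_k$ in the recursion gives $\widehat{f}(y)=g(\widehat{T}(y))-g(y)+C_k$ on that range; matching values at the shared endpoint $\tau_k(1)=\tau_{k+1}(0)$ yields $C_0=\cdots=C_{d-1}=:c$, and analyticity extends the identity to all of $\mathbb{R}$. A standard logarithmic growth estimate $|h_{\mathbf{i}}(x)|=O(\log|x|)$ as $|x|\to\infty$ (obtained by balancing the Lipschitz and the boundedness bounds for $\widehat{f}$), combined with the resulting relation $g(x+1)-g(x)=g(1)$ from identity (ii), forces $g(1)=0$ and hence $g$ is $1$-periodic; thus $g$ descends to a real analytic function on $\mathbb{R}/\mathbb{Z}$ and $f=g\circ T-g+c$ as claimed.
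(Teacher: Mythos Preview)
Your overall architecture matches the paper's: reduce to showing that $G_0=\{\mathbf{i}:h_{\mathbf{i}}\equiv h_{\mathbf{0}}\}$ is a closed subgroup of the $d$-adic integers (the paper's Lemma~2.1), observe the shift-compatibility constraint, and then argue that the only possibilities are $\{\mathbf{0}\}$ and all of $\Sigma_d$. Your easy direction (telescoping) and your deduction of the cohomology once $H=\Sigma_d$ are correct, if slightly more laborious than the paper's route via ``$h_{\mathbf{0}}$ is $1$-periodic $\Rightarrow$ $\widehat f=h_{\mathbf{0}}\circ\widehat T-h_{\mathbf{0}}+\widehat f(0)$'' (Theorem~2.1, (iii)$\Rightarrow$(iv)). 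Two small remarks: in your step (i), real-analytic continuation is unnecessary, since $\tau_k:\mathbb{R}\to\mathbb{R}$ is a homeomorphism and hence $\tau_k(\mathbb{R})=\mathbb{R}$; and your structural observation that $H\cap d\mathbb{Z}_d=dH$ forces each $\mathbb{Z}_p$-component of $H$ to be $\{0\}$ or $\mathbb{Z}_p$ is a nice sharpening of what the paper states for the prime case.

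However, there is a genuine gap at the decisive point. For composite $d$ you must rule out ``mixed'' subgroups such as $\mathbb{Z}_2\times\{0\}\subset\mathbb{Z}_2\times\mathbb{Z}_3\cong\mathbb{Z}_6$, and here you only write that ``the plan is to combine'' the translation identity $F_{\sigma\mathbf{i}}(y)-F_{\sigma\mathbf{j}}(y+s)=\mathrm{const}$ ``with real analyticity\ldots\ to derive enough rigidity''; no argument is given. This is precisely the heart of the matter, and it is not clear that your translation identity alone yields the needed rigidity---a single relation between $F_{\sigma\mathbf{i}}$ and a translate of $F_{\sigma\mathbf{j}}$ does not obviously propagate to force $H$ to be all of $\Sigma_d$. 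The paper handles this step by an entirely different mechanism (Proposition~2.2): it conjugates to the linear model $x\mapsto dx$, introduces $\mathcal{H}^s(\mathbf{k},x)=H_{\mathbf{k}}(x+s)-H_{\mathbf{k}}(x)$ on $\Sigma_d\times[0,1)$, and computes Fourier coefficients $c^s_{m,q}=\int \mathcal{H}^s\,\overline{E_{m,q}}\,d\mu$ against the characters $E_{m,q}=e^{2\pi i q\,\xi_m}$. The symmetry $\mathcal{H}^{ds}\circ m_d=\lambda\mathcal{H}^s$ gives the recursion $c^{ds}_{m+1,q}=\lambda c^s_{m,q}$, while invariance under any nonzero $\mathbf{i}\in G_0$ gives $c^s_{m,q}=e^{2\pi i q z_m}c^s_{m,q}$ with $z_m\not\equiv 0\pmod{1}$ for $m$ large (Lemma~2.3); together these force all $c^s_{m,q}=0$, hence each $\mathcal{H}^s$ is constant, hence $h_{\mathbf{0}}$ is $1$-periodic. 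Note in particular that this Fourier argument uses only $C^\alpha$ regularity, so the paper actually proves the dichotomy in the stronger generality of Theorem~2.1; your proposed route, even if it could be completed, leans on real analyticity at a step where the paper does not.
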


We prove this theorem using the odometer structure of the space $\Sigma_d$. An important observation is that $G_0:=\{\mathbf{i}\in \Sigma_d: h_{\mathbf{i}}=h_{\mathbf{0}}\}$ is a closed subgroup of $\Sigma_d$, where $\mathbf{0}$ denotes the element of $\Sigma_d$ whose entries are all $0$. When $d$ is prime, this immediately implies that either $G_0=\{\mathbf{0}\}$ or $G_0=\Sigma_d$. The same conclusion remains true for an arbitrary integer $d\ge 2$ which will be dealt with by Fourier analysis. See Proposition~\ref{prop:C1(i)(iii)}.
Note that this kind of transversality problem appears naturally in the study of skew product over circle expanding maps. In particular, a special case of this type of dichotomy was obtained in \cite[Theorem A]{RS21} using a different method, which represents an important step in the study of a dimension dichotomy for the graphs of Weierstrass-type functions. The method here provides a much more general result with simpler proofs.

%See \S~\ref{sec:tran} and Appendix~\ref{se:app}.

Let us explain how Theorem~\ref{thm:transversal} is used in the proof of the Main Theorem.
Assume that $f$ is not real analytically cohomologous to a constant. Then the second alternative of Theorem~\ref{thm:transversal} holds.  By ergodic decomposition, we only need to show that any ergodic maximizing measure $\mu$ has zero entropy. Let $S$ denote the support of $\mu$,
%\textcolor{red}{ which is a compact $T$-invariant set (could this be deleted? The meaning of ``$T$-invariant" is ambiguous.)}.
We may assume that $T(0)=0$ and $0\not\in S$ so that $S$ can be identified naturally with a compact subset of $(0,1)$.
%Let $\tau_0,\tau_1,\cdots, \tau_{d-1}$ denote the inverse branches of $T:[0,1)\to [0,1)$ and define
The space of inverse orbits in $S$ is naturally identified with
$$\mathcal{S}=\{((i_n)_{n\ge 1}, x): i_n\in \{0,1,\ldots, d-1\}, x\in S, \tau_{i_n}\tau_{i_{n-1}}\cdots \tau_{i_1}(x)\in S, \forall n\}.$$
With the help of a sub-action, we deduce from Theorem~\ref{thm:transversal} that if the section
$$\mathcal{S}_{\mathbf{i}}=\{x\in S: (\mathbf{i}, x)\in \mathcal{S}\}$$
has a limit point $x_0$, then $\mathbf{i}$ belongs to a set which has at most two elements and which is uniquely determined by $x_0$, see
Proposition~\ref{prop:limitpt}. (It is here that we need real analyticity of $f$ and $T$.)
This enables us to apply well-known results in dimension theory of dynamical systems (for example \cite{LY85}) to conclude that $\mu$ has entropy zero.

Now we state a few corollaries of the Main Theorem.

\begin{thm}\label{thm:Cr}
Let $T:\mathbb{R}/\mathbb{Z}\to \mathbb{R}/\mathbb{Z}$ be expanding and real analytic. Let $r\in\{0,1,2,\cdots,\infty\}$. Then for generic
 %\footnote{Here $C^r(\mathbb{R}/\mathbb{Z})$ is considered as a Banach space in the usual way for $r<\infty$, while $C^\infty(\mathbb{R}/\mathbb{Z})$ is only a Frechet space.}
 $f\in C^r(\mathbb{R}/\mathbb{Z})$, any optimizing  measure of $f$ is of zero entropy.
\end{thm}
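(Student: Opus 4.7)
The plan is to exhibit a dense $G_\delta$ subset $\mathcal{R}\subset C^r(\mathbb{R}/\mathbb{Z})$ on which every optimizing measure has zero entropy. The $G_\delta$ character will come from upper semi-continuity of the entropy functional attached to optimizing measures, and density will come from combining the Main Theorem with density in $C^r$ of real analytic functions that are not analytically cohomologous to a constant.

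For $f\in C^0(\mathbb{R}/\mathbb{Z})$, set $\mathcal{M}_{\max}(f):=\{\mu\in\mathcal{M}(T):\int f\dif\mu=\beta(f)\}$ and define
\[
h_{\max}(f):=\sup\{h_\mu(T):\mu\in\mathcal{M}_{\max}(f)\},
\]
with $\mathcal{M}_{\min}(f)$ and $h_{\min}(f)$ defined symmetrically. Since $T$ is expansive, $\mu\mapsto h_\mu(T)$ is upper semi-continuous on the weak*-compact set $\mathcal{M}(T)$, so each supremum above is attained. I claim $h_{\max}$ is upper semi-continuous on $C^0(\mathbb{R}/\mathbb{Z})$: if $f_n\to f$ uniformly and $\mu_n\in\mathcal{M}_{\max}(f_n)$ realizes $h_{\max}(f_n)$, pass to a weak*-convergent subsequence $\mu_n\to\mu$; the $1$-Lipschitz dependence of $\beta$ on $f$ gives $\int f\dif\mu=\lim\int f_n\dif\mu_n=\lim\beta(f_n)=\beta(f)$, so $\mu\in\mathcal{M}_{\max}(f)$, and upper semi-continuity of entropy then yields $h_{\max}(f)\ge h_\mu(T)\ge\limsup h_{\max}(f_n)$. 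The same reasoning applies to $h_{\min}$. Consequently
\[
\mathcal{R}:=\bigcap_{n\ge 1}\bigl\{f\in C^r:h_{\max}(f)<\tfrac{1}{n}\bigr\}\cap\bigcap_{n\ge 1}\bigl\{f\in C^r:h_{\min}(f)<\tfrac{1}{n}\bigr\}
\]
is a $G_\delta$ in $C^r(\mathbb{R}/\mathbb{Z})$, and every $f\in\mathcal{R}$ has all of its optimizing measures of zero entropy.

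It remains to prove $\mathcal{R}$ is dense. Real analytic functions are dense in $C^r(\mathbb{R}/\mathbb{Z})$ (e.g.\ by convolution with the Fej\'er kernel). Fix once and for all a real analytic $\phi_0$ that is not real analytically cohomologous to a constant; such $\phi_0$ exists because any coboundary-plus-constant integrates to the same value against every invariant measure, and $T$ admits distinct periodic orbits whose associated invariant measures are easily separated by a real analytic test function. Given $f\in C^r(\mathbb{R}/\mathbb{Z})$ and $\varepsilon>0$, pick a real analytic $\tilde f$ with $\|f-\tilde f\|_{C^r}<\varepsilon/2$; if $\tilde f$ is not analytically cohomologous to a constant set $f'=\tilde f$, otherwise set $f'=\tilde f+\delta\phi_0$ with $\delta>0$ small enough that $\|f'-\tilde f\|_{C^r}<\varepsilon/2$. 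Since the analytic coboundaries-plus-constants form a linear subspace not containing $\phi_0$, $f'$ is not analytically cohomologous to a constant, so by the Main Theorem $f'\in\mathcal{R}$; thus $\mathcal{R}$ is dense. The only genuinely non-trivial step is the upper semi-continuity of $h_{\max}$ and $h_{\min}$, which rests on expansiveness of $T$; the Main Theorem is invoked merely to guarantee that the explicit dense set of real analytic non-coboundary functions lies in $\mathcal{R}$.
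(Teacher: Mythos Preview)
Your proof is correct and follows essentially the same approach as the paper: upper semi-continuity of the maximal entropy over optimizing measures (from expansiveness of $T$) gives the $G_\delta$ property, and density comes from approximating by real analytic functions not cohomologous to a constant, to which the Main Theorem applies. The only cosmetic differences are that the paper treats only the maximizing case (noting $\alpha(f)=-\beta(-f)$) and establishes density via the observation that $\{f:\int f\dif\mu\text{ is independent of }\mu\in\mathcal{M}(T)\}$ is closed and nowhere dense, rather than your explicit $\phi_0$-perturbation; both arguments are equivalent in spirit.
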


As mentioned before, Morris \cite{Mor08} proved a similar result for H\"older and Lipschitz functions, using a result of Bressaud and Quas \cite{BQ07}. Our argument is based on our Main Theorem and the fact that $C^r$ functions can be approximated by real analytic ones. %Based on the result of Morris, Contreras \cite{Con16} further proved that optimizing measures are generically supported on a periodic orbit. See also \cite{HLMXZ19} for a different approach of  Contreras' theorem. It might be mentioned that the arguments in \cite{Morris,Con16,HLMXZ19} all rely on a result of Bressaud and Quas \cite{BQ07}, while our argument does not.
It is worthy noting that the $C^0$ case is particularly special, as generically, the unique optimizing measure is fully supported but has zero entropy, see \cite{BJ02, JM08, Bre08}.
%(credited to \cite{JM08}); on the other hand, in \cite[Theorem~C]{BJ02} it was shown that optimizing measures for generic $C^0$ functions are of full support.

%\begin{rmk}
%  Our approach to the result above is based on the fact that real analytic functions are dense in $C^r$. Therefore, it cannot prove similar result %for function spaces such Lipschitz functions or $C^1$ fun
The special case $f=\pm\log|T'|$ is often referred to as {\em Lyapunov optimization}, see \cite{CLT01}. Our result has the following consequences.
 %we have (\textcolor{red}{should some details, say, citing \cite[PROPOSITION 28]{CLT01}, be added?}):

\begin{thm}\label{thm:analytic Lyp}
 Let $T:\mathbb{R}/\mathbb{Z}\to \mathbb{R}/\mathbb{Z}$ be expanding and real analytic. Then
 \begin{itemize}
   \item either there exists a real analytic diffeomorphism $\phi:\mathbb{R}/\mathbb{Z}\to\mathbb{R}/\mathbb{Z}$ such that $\phi\circ T\circ \phi^{-1}(x)=\deg(T)\cdot x$;
   \item or any Lyapunov optimizing measure of $T$ is of zero entropy.
 \end{itemize}
\end{thm}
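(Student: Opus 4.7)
\medskip

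\noindent\textbf{Proof plan.} The approach is to apply the Main Theorem directly to the potential $f = \log|T'|$, and then show that the ``cohomologous-to-constant'' alternative produces the desired real analytic conjugacy to the linear model. Since $T$ is real analytic and expanding, $|T'|>1$ pointwise, so $\log|T'|$ is real analytic on $\mathbb{R}/\mathbb{Z}$. The Main Theorem then yields the dichotomy: either $\log|T'|$ is real analytically cohomologous to some constant $c$, or every optimizing measure of $\log|T'|$ has zero entropy. The second alternative coincides with the second alternative of the theorem, since Lyapunov optimizing measures are by definition the optimizing measures of $\pm\log|T'|$. Thus the task reduces to the following: assuming $\log|T'| = g\circ T - g + c$ with $g$ real analytic, produce a real analytic circle diffeomorphism $\phi$ conjugating $T$ to $L(x) := \deg(T)\cdot x$.

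Set $d = |\deg(T)|\ge 2$. The first step is to identify $c = \log d$. Integrating the cohomology relation against any invariant measure $\mu$ gives $\int \log|T'|\,d\mu = c$, so the Lyapunov exponent of every $\mu\in\mathcal{M}(T)$ equals $c$. Applied to the measure of maximal entropy, Ruelle's inequality gives $\log d = h_{\mathrm{top}}(T) \le c$; applied to the unique absolutely continuous invariant probability measure $\mu_{\mathrm{SRB}}$ (which exists since $T$ is a smooth expanding map of the circle), the Pesin--Rokhlin identity yields $c = h_{\mu_{\mathrm{SRB}}}(T) \le h_{\mathrm{top}}(T) = \log d$. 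Hence $c = \log d$.

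The second step is to write $\phi$ down explicitly. Assume first that $T$ is orientation-preserving, so $T' > 0$. Set $Z = \int_0^1 e^{-g(y)}\,dy > 0$ and define the real analytic lift
$$
\widehat{\phi}(x) = \frac{1}{Z}\int_0^x e^{-g(y)}\,dy, \qquad x \in \mathbb{R}.
$$
Since $g$ is $1$-periodic, $\widehat{\phi}(x+1) = \widehat{\phi}(x) + 1$, so $\widehat{\phi}$ descends to a real analytic orientation-preserving diffeomorphism $\phi : \mathbb{R}/\mathbb{Z} \to \mathbb{R}/\mathbb{Z}$ with $\phi'(x) = e^{-g(x)}/Z$. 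Substituting the cohomology relation $T'(x) = d\,e^{g(T(x))-g(x)}$ gives the one-line computation
$$
(\phi\circ T)'(x) = \phi'(T(x))\cdot T'(x) = \frac{e^{-g(T(x))}}{Z}\cdot d\,e^{g(T(x))-g(x)} = d\cdot\phi'(x) = (L\circ\phi)'(x).
$$
Hence $\phi\circ T$ and $L\circ \phi$ differ by a constant on $\mathbb{R}/\mathbb{Z}$, which can be eliminated by replacing $\phi$ with $\phi + \beta$ for the unique $\beta$ solving $(d-1)\beta \equiv \gamma \pmod{1}$, where $\gamma$ is that constant; this is solvable because $d\ne 1$. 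The orientation-reversing case is completely analogous, yielding conjugacy to $x\mapsto -d\,x = \deg(T)\cdot x$.

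The whole argument is essentially a packaging of the Main Theorem; no step is a serious obstacle. The one place where non-elementary input is needed is the identification $c = \log d$, which relies on the standard Ruelle inequality and the Pesin--Rokhlin formula for smooth expanding maps of the circle.
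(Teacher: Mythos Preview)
Your proof is correct and follows essentially the same line as the paper: apply the Main Theorem to $f=\log|T'|$, and in the cohomologous case integrate $e^{-g}$ to produce the conjugacy $\phi$. The one noteworthy difference is the identification of the constant $c$. You invoke Ruelle's inequality and the Pesin--Rokhlin formula, and flag this as the only non-elementary step; the paper bypasses this entirely by first writing $(\phi\circ T)'=e^{c}\,\phi'$ (respectively $-e^{c}\,\phi'$ in the orientation-reversing case) and then integrating both sides over $\mathbb{R}/\mathbb{Z}$, which immediately yields $e^{c}=|\deg(T)|=d$ since $\int_0^1(\phi\circ T)'=\deg(T)$ and $\int_0^1\phi'=1$. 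The paper packages this step as a separate proposition stated more generally, starting from mere $C^0$ cohomology and therefore needing an additional ACIM argument to upgrade the regularity of $\phi$; in your situation $g$ is already real analytic, so your direct construction of $\phi$ is if anything cleaner on that front.
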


%The notion of Lyapunov optimizing measure might be first introduced in \cite{CLT01}.

For $r\in\{1,2,3,\cdots,\infty\}$, let $\mathcal{C}^r$ denote the collection of $C^r$ maps from $\mathbb{R}/\mathbb{Z}$ to itself, endowed with the $C^r$ topology. Let $\mcle^r$ be the open subset of $\mathcal{C}^r$ consisting of $C^r$ expanding maps on $\mathbb{R}/\mathbb{Z}$.
 %which is an open subset of $C^r$ self-maps on $\mathbb{R}/\mathbb{Z}$. (\textcolor{red}{should the topology of $\mcle^r$ and/or $C^r(\mathbb{R}/\mathbb{Z})$ %be defined?})

%{\bf Question.} Can we show that  $(T,f)\mapsto \mclh_T(f)$ is upper semi-continuous on $\mcle^1\times C^1(\mathbb{R}/\mathbb{Z})$ (or at least on $\mcle^2\times C^1(\mathbb{R}/\mathbb{Z})$)? It seems that  $(T,f)\mapsto \mclp_T(f)$ is continuous  $\mcle^1\times C^1(\mathbb{R}/\mathbb{Z})$ and it is not hard to prove. What about the continuity of $(T,f)\mapsto h_T(f)$?
%
%If the answer of the first question positive, then we should have:

\begin{thm}\label{thm:Cr Lyp}
 Let $r\in\{1,2,3,\cdots,\infty\}$. For generic $T\in \mcle^r$, any Lyapunov optimizing measure of $T$ is of zero entropy.
\end{thm}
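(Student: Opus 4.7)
\textbf{Proof plan for Theorem~\ref{thm:Cr Lyp}.}

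The plan is to combine Theorem~\ref{thm:analytic Lyp} with a Baire-category argument. For $T\in\mcle^r$, let $H^-(T)$ (respectively $H^+(T)$) denote the supremum of $h_\mu(T)$ over all Lyapunov minimizing (respectively maximizing) measures $\mu$ for $T$. The conclusion of the theorem is equivalent to the assertion that $\mcle^r_0:=\{T\in\mcle^r:H^-(T)=H^+(T)=0\}$ is residual. I will prove (a) upper semicontinuity of $H^\pm$ on $\mcle^r$ and (b) density of $\mcle^r_0$ in $\mcle^r$; from (a), $\mcle^r_0=\bigcap_{n\ge 1}\{T:H^-(T)+H^+(T)<1/n\}$ is a $G_\delta$ set, which together with (b) yields the theorem.

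For (b), real analytic expanding maps are $C^r$-dense in $\mcle^r$ by standard real-analytic approximation. Among them, those that are real-analytically conjugate to $L_d:x\mapsto d\cdot x$ form a $C^r$-nowhere-dense subset: a chain-rule computation using $\phi\circ L_d=T\circ \phi$ shows that any such $T$ has multiplier $(T^n)'(p)=d^n$ at every $T$-periodic point $p$ of period $n$, and an arbitrarily small real-analytic perturbation of $T$ of the form $T(x)+\delta\sin(2\pi x)$ alters the multiplier at a fixed point and therefore destroys this property while keeping $T$ inside the real-analytic expanding category. Theorem~\ref{thm:analytic Lyp} then yields $H^-(T)=H^+(T)=0$ for every $T$ in this $C^r$-dense set of real-analytic, non-linearizable expanding maps.

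For (a), I rely on structural stability: whenever $T_k\to T$ in $\mcle^r$, there are homeomorphisms $h_k$ of $\mathbb{R}/\mathbb{Z}$ with $h_k\circ T=T_k\circ h_k$ and $h_k\to\id$ uniformly. Uniform convergence $\log|T_k'|\circ h_k\to\log|T'|$ follows from $h_k\to\id$ and $T_k\to T$ in $C^1$, and pushing invariant measures back and forth along $h_k^{\pm 1}$ then gives continuity of $\alpha(T,\log|T'|)$ and $\beta(T,\log|T'|)$ in $T$. Now pick Lyapunov minimizing measures $\mu_k$ for $T_k$ with $h_{\mu_k}(T_k)\to\limsup_k H^-(T_k)$ and set $\tilde\mu_k:=(h_k^{-1})_*\mu_k\in\mathcal{M}(T)$; note that $h_{\tilde\mu_k}(T)=h_{\mu_k}(T_k)$ since entropy is a topological conjugacy invariant. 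A weak-$*$ subsequential limit $\mu$ of $\tilde\mu_k$ is Lyapunov minimizing for $T$ by the continuity just established, and upper semicontinuity of the entropy functional on $\mathcal{M}(T)$, valid because $C^1$ expanding circle maps are expansive, yields $\limsup_k H^-(T_k)\le h_\mu(T)\le H^-(T)$. The argument for $H^+$ is identical.

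The main obstacle is step (a): invariant measures must be transported between nearby but distinct expanding maps without losing entropy. Structural stability supplies the conjugacies $h_k$, and although these may be only H\"older regular, that is enough because entropy is a \emph{topological} conjugacy invariant; no smoothness of $h_k$ is used anywhere in the argument.
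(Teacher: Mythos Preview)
Your proposal is correct and follows essentially the same route as the paper. Your step (a) is exactly the content of the paper's Proposition~\ref{prop:usc} specialized to $f=\log|T'|$: transport measures through the structural-stability conjugacies $h_k$, use that entropy is a topological conjugacy invariant, and invoke upper semicontinuity of the entropy map for the fixed expanding map $T$. Your step (b) matches the paper's density argument; the only cosmetic difference is that the paper phrases the nowhere-dense exceptional set as $\{T:\int\log|T'|\,d\mu\text{ is independent of }\mu\}$, while you characterize it via the equivalent periodic-multiplier condition and exhibit an explicit perturbation. (A small caveat: for a \emph{particular} $T$ the specific perturbation $T+\delta\sin(2\pi x)$ could, in degenerate cases, leave the multiplier at a given fixed point unchanged to first order after accounting for the drift of the fixed point; but one can always choose a different analytic bump or a different periodic orbit, so the argument goes through.)
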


In \cite{HXY21}, it is proved that for generic $T\in\mcle^2$, a Lyapunov optimizing measure is supported on a periodic orbit. See \cite{CLT01,JM08} for earlier results on $C^{1+\alpha}$ and $C^1$ cases respectively.

The rest of the paper is organized as follows. In \S~\ref{sec:tran}, we prove Theorem~\ref{thm:transversal} and a more general transversality result; in \S~\ref{sec:pfmainthm}, we prove the Main Theorem; in \S~\ref{sec:coro}, we prove Theorems~\ref{thm:Cr},~\ref{thm:analytic Lyp} and~\ref{thm:Cr Lyp}.
%In Appendix~\ref{se:app}, we state and prove a more general version of Theorem~\ref{thm:transversal}.

{\bf Acknowledgment.} RG would like to thank Bing Gao and Zeng Lian for helpful discussions. We would also like to thank Jairo Bochi for reading a first version of the paper and several useful remarks. WS is supported by NSFC grant No. 11731003.
\section{A transversality result}
\label{sec:tran}
In this section, we state and prove a more general version of Theorem~\ref{thm:transversal}. The proof exploits the structure of $\Sigma_d=\{0,1,\ldots, d-1\}^\mathbb{N}$ as a compact abelian group.

%In \S~\ref{subsec:closedsug}, we recall some well known facts and determine the structure of closed subgroups of $\Sigma_d$, see %Proposition~\ref{prop:struc}. In \S~\ref{subsec:trans}, we prove a slightly more general form (Proposition~\ref{prop:C1}) of %Theorem~\ref{thm:transversal}.

\subsection{Statement of result}\label{subsec:trans}
We start with the statement of the general transversality result. Let $T:\mathbb{R}/\mathbb{Z}\to \mathbb{R}/\mathbb{Z}$ be a $C^1$ orientation-preserving expanding map with $T(0)=0$ and let $d\ge 2$ be the degree. Let $f:\mathbb{R}/\mathbb{Z}\to {\mathbb{C}}$
be a $C^\alpha$ function for some $\alpha\in (0,1)$. As before, denote by $\whT:\mathbb{R}\to\mathbb{R}$ the lift of $T$ with $\whT(0)=0$, $\tau=\whT^{-1},$ and  $\tau_i(x)=\tau(x+i)$.
Given $\mathbf{i}=(i_1,\cdots,i_n,\cdots)\in \Sigma_d$, for any $n\ge 1$, denote:
\[
\tau_{\mathbf{i},n}:=\tau_{i_n}\circ \cdots \circ\tau_{i_2}\circ \tau_{i_1}.
\]
Let $\lambda_0=\sup_{n=1}^\infty \min_{x\in\mathbb{R}} |(\whT^n)'(x)|^{1/n}>1$.~\footnote{By~\cite[Theorem A.3]{Morris2013}, $\log\lambda_0=\alpha(T, \log |T'|)$, where $\alpha$ is as in (\ref{eqn:betaalpha}).} Note that for any $0<\lambda_1<\lambda_0$, there exists $C\ge 1$ such that
$$\sup_{x\in \mathbb{R}, \mathbf{i}\in \Sigma_d} |\tau'_{\mathbf{i},n}(x)|\le C \lambda_1^{-n}, \quad \forall~n\ge 0.$$
Fix a $\lambda\in \mathbb{C}$ such that $0<|\lambda|<\lambda_0^\alpha$.
Then,
%\textcolor{red}{(is it necessary to add some explanation for the convergence below??? for example: by definition, $|\lambda|<\min_{x\in\mathbb{R}} |(\whT^n)'(x)|^{\alpha/n}$)
%So for any sufficiently large $n$)}
for each $\mathbf{i}\in \Sigma_d$,
\[
h_{\mathbf{i}}(x):=\sum_{n=1}^\infty \lambda^n\left(\widehat{f}\circ \tau_{\mathbf{i},n}(x)-\widehat{f}\circ \tau_{\mathbf{i},n}(0)\right), \quad\quad x\in\mathbb{R},
\]
%\begin{equation}\label{eqn:hi}
%h_{\mathbf{i}}(x):=\sum_{n=1}^\infty \lambda^n\left(f\circ \tau_{\mathbf{i},n}(x)-f\circ \tau_{\mathbf{i},n}(0)\right), \quad\quad x\in\mathbb{R},
%\end{equation}
is a well-defined continuous function from $\mathbb{R}$ to $\mathbb{C}$, where $\widehat{f}=f\circ \pi$ and $\pi:\mathbb{R}\to\mathbb{R}/\mathbb{Z}$ is the natural projection. The following theorem is our main transversality result.
\begin{thm}\label{thm:C1}
Under the circumstances above, the following are equivalent:
\begin{enumerate}
  \item [(i)] there exist $\mathbf{i}\ne\mathbf{j}$ with $h_{\mathbf{i}} \equiv h_{\mathbf{j}}$;
  \item [(ii)] $h_{\mathbf{i}}=h_{\mathbf{0}}$ for any $\mathbf{i}$;
  \item [(iii)] $h_{\mathbf{0}}$ is $1$-periodic;
  \item [(iv)] $h_{\mathbf{0}}$ is $1$-periodic and $\widehat{f}=\lambda^{-1}\cdot h_{\mathbf{0}}\circ \whT-h_{\mathbf{0}} +\widehat{f}(0)$;
  \item [(v)] there exists a $C^\alpha$ function $\phi:\mathbb{R}/\mathbb{Z}\to\mathbb{C}$ such that $f=\lambda^{-1}\cdot\phi\circ T-\phi+const$.
%$$f=\lambda^{-1}\cdot\phi\circ \whT-\phi+const.$$
\end{enumerate}
\end{thm}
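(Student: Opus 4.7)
The plan is to close the cycle $(\text{iii})\Leftrightarrow(\text{iv})\Rightarrow(\text{v})\Rightarrow(\text{ii})\Rightarrow(\text{i})\Rightarrow(\text{iii})$, with the first four arrows being cohomological bookkeeping and the last one being the substantive step that exploits the odometer group structure on $\Sigma_d$.

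For the cohomological part, I split off the $n=1$ term in the series defining $h_{\mathbf{0}}$: using $\tau_{\mathbf{0},n+1}=\tau_{\mathbf{0},n}\circ\tau_0$ and $\tau_0(0)=0$, the recursion $h_{\mathbf{0}}(x)=\lambda[\widehat{f}(\tau_0(x))-\widehat{f}(0)]+\lambda h_{\mathbf{0}}(\tau_0(x))$ holds unconditionally. Substituting $y=\tau_0(x)$ turns this into
\[
\widehat{f}(y)=\lambda^{-1}h_{\mathbf{0}}(\widehat{T}(y))-h_{\mathbf{0}}(y)+\widehat{f}(0),\qquad y\in\mathbb{R},
\]
which is always valid, so (iii)$\Leftrightarrow$(iv). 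The implication (iv)$\Rightarrow$(v) comes from descending a $1$-periodic $h_{\mathbf{0}}$ to some $\phi\in C^\alpha(\mathbb{R}/\mathbb{Z})$. For (v)$\Rightarrow$(ii), I substitute the lift $\widehat{f}=\lambda^{-1}\tilde\phi\circ\widehat{T}-\tilde\phi+c$ (with $\tilde\phi=\phi\circ\pi$) into the definition of $h_{\mathbf{i}}$; $1$-periodicity of $\tilde\phi$ gives $\tilde\phi(\widehat{T}\tau_{\mathbf{i},n}(x))=\tilde\phi(\tau_{\mathbf{i},n-1}(x)+i_n)=\tilde\phi(\tau_{\mathbf{i},n-1}(x))$, so the partial sums telescope to $\tilde\phi(x)-\tilde\phi(0)-\lambda^N[\tilde\phi(\tau_{\mathbf{i},N}(x))-\tilde\phi(\tau_{\mathbf{i},N}(0))]$; the boundary term is $O((|\lambda|/\lambda_1^\alpha)^N|x|^\alpha)\to 0$ by $|\lambda|<\lambda_0^\alpha$, so $h_{\mathbf{i}}(x)=\tilde\phi(x)-\tilde\phi(0)$ independently of $\mathbf{i}$. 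The step (ii)$\Rightarrow$(i) is trivial.

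For the substantive direction (i)$\Rightarrow$(iii), I view $\Sigma_d\cong\varprojlim\mathbb{Z}/d^n\mathbb{Z}$ as a compact abelian group under carry-addition and set $G_0:=\{\mathbf{i}\in\Sigma_d:h_{\mathbf{i}}\equiv h_{\mathbf{0}}\}$, closed by continuity of $\mathbf{i}\mapsto h_{\mathbf{i}}$ in uniform convergence on compacts. The central cocycle identity is
\[
h_{\mathbf{i}+\mathbf{1}}(x)=h_{\mathbf{i}}(x+1)-h_{\mathbf{i}}(1),\qquad \mathbf{1}:=(1,0,0,\ldots),
\]
proved by case analysis on the carries (using $\tau(z+d)=\tau(z)+1$) to show $\tau_{\mathbf{i}+\mathbf{1},n}(x)-\tau_{\mathbf{i},n}(x+1)\in\mathbb{Z}$ for every $n$, combined with $1$-periodicity of $\widehat{f}$. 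Iterating gives $h_{\mathbf{i}+k\mathbf{1}}(x)=h_{\mathbf{i}}(x+k)-h_{\mathbf{i}}(k)$ for integers $k\geq 0$. For $\mathbf{i}\in G_0$ this specializes to $h_{\mathbf{i}+k\mathbf{1}}=h_{k\mathbf{1}}$; continuity in $\mathbf{j}$ and density of $\mathbb{Z}\cdot\mathbf{1}$ in $\Sigma_d$ then extend this to $h_{\mathbf{i}+\mathbf{j}}=h_{\mathbf{j}}$ for every $\mathbf{j}\in\Sigma_d$, which shows $G_0$ is closed under addition. Since a non-empty closed submonoid of a compact group is a subgroup, $G_0$ is a closed subgroup of $\Sigma_d$. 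A complementary identity, obtained by comparing the recursions for $h_{\mathbf{j}}$ and $h_{\mathbf{0}}$ when $j_1=0$, shows that $\sigma(\mathbf{j})\in G_0$ whenever $\mathbf{j}\in G_0$ has $j_1=0$, where $\sigma$ denotes the left shift.

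Under assumption (i), $G_0$ contains a non-zero element, so it is non-trivial. To conclude $G_0=\Sigma_d$: when $d=p$ is prime, the closed subgroups of $\Sigma_p$ are exactly $\{\mathbf{0}\}$ and $H_k:=\{\mathbf{j}:j_1=\cdots=j_k=0\}$ for $k\geq 0$, and for $k\geq 1$ the $\sigma$-invariance of $G_0=H_k$ would require $H_{k-1}=\sigma(H_k)\subset H_k$, which fails since $H_{k-1}\supsetneq H_k$. For composite $d=\prod_{p\mid d}p^{a_p}$, $\Sigma_d\cong\prod_{p\mid d}\mathbb{Z}_p$ and the analogous dichotomy is proved by Fourier analysis on $\Sigma_d$, using that its dual group is $\mathbb{Z}[1/d]/\mathbb{Z}$: the cocycle identity together with $\sigma$-invariance translates into spectral constraints forcing the non-trivial characters of $\Sigma_d$ to vanish on $G_0$. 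Once $G_0=\Sigma_d$ (equivalently, (ii) holds), the cocycle identity with $\mathbf{i}=\mathbf{0}$ and $h_{\mathbf{1}}=h_{\mathbf{0}}$ yields $h_{\mathbf{0}}(x+k)-h_{\mathbf{0}}(x)=k\,h_{\mathbf{0}}(1)$; combined with the Hölder bound $|h_{\mathbf{0}}(x)|\leq C|x|^\alpha$ with $\alpha<1$ (from $|\lambda|<\lambda_0^\alpha$), sending $k\to\infty$ forces $h_{\mathbf{0}}(1)=0$, giving the $1$-periodicity of $h_{\mathbf{0}}$, which is (iii). The most delicate step I anticipate is the Fourier-analytic dichotomy for composite $d$, where matching the multiplicative shift $\sigma$ with the additive character lattice of $\prod_{p\mid d}\mathbb{Z}_p$ will require careful bookkeeping between the two structures.
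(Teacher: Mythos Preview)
Your cohomological bookkeeping is correct and matches the paper: the recursion for $h_{\mathbf{0}}$ gives (iii)$\Leftrightarrow$(iv), the telescoping for (v)$\Rightarrow$(ii) is exactly the paper's argument, and your cocycle identity $h_{\mathbf{i}+\iota(m)}(x)=h_{\mathbf{i}}(x+m)-h_{\mathbf{i}}(m)$ together with density of $\iota(\mathbb{Z}_+)$ yields that $G_0$ is a closed subgroup, as in the paper's Lemma~2.2. Your growth argument $|h_{\mathbf{0}}(k)|\le Ck^\alpha$ forcing $h_{\mathbf{0}}(1)=0$ is a clean variant of the paper's Lemma~2.6. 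For prime $d$, your use of the shift property ($\mathbf{j}\in G_0$, $j_1=0\Rightarrow\sigma(\mathbf{j})\in G_0$) to rule out the subgroups $p^k\mathbb{Z}_p$ with $k\ge 1$ is equivalent to the paper's prime-case argument.

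The genuine gap is the composite case. The two abstract properties you have extracted --- $G_0$ is a nontrivial closed subgroup of $\Sigma_d$, and $d\mathbf{k}\in G_0\Rightarrow\mathbf{k}\in G_0$ --- do \emph{not} force $G_0=\Sigma_d$. For $d=pq$ with distinct primes $p,q$, identify $\Sigma_d\cong\mathbb{Z}_p\times\mathbb{Z}_q$ and take $G_0=\{0\}\times\mathbb{Z}_q$: this is a nontrivial closed subgroup, and if $d\mathbf{k}\in G_0$ then the $\mathbb{Z}_p$-component of $d\mathbf{k}$ vanishes, forcing the $\mathbb{Z}_p$-component of $\mathbf{k}$ to vanish (since $\mathbb{Z}_p$ is a domain), hence $\mathbf{k}\in G_0$. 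Note moreover $\iota(1)=(1,1)\notin G_0$, so even the weaker target $\iota(1)\in G_0$ fails. Thus no amount of Fourier analysis on $\Sigma_d$ \emph{alone}, using only these two constraints, can give the conclusion; one must use more of the structure of the family $\{h_{\mathbf{i}}\}$. The paper does this by working on $\Sigma_d\times[0,1)$ rather than $\Sigma_d$: after linearising $T$, it studies $\mathcal{H}^s(\mathbf{k},x)=H_{\mathbf{k}}(x+s)-H_{\mathbf{k}}(x)$ and its Fourier coefficients against the functions $E_{m,q}(\mathbf{k},x)=e^{2\pi iq\,\xi_m(\mathbf{k},x)}$. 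The key extra input (Lemma~2.5) is that for any nonzero $\mathbf{i}\in G_0$ the backward orbit $z_m=(i_1+\cdots+i_md^{m-1})/d^m$ eventually leaves every finite $\mathbf{m}_d$-invariant subset of the circle, which combined with the recursion $c_{m+1,q}^{ds}=\lambda c_{m,q}^s$ kills all nontrivial coefficients. Your sketch should be revised to incorporate this joint $(\mathbf{k},x)$-analysis; the character theory of $\Sigma_d$ by itself is not enough.
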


\begin{proof}[Proof of Theorem~\ref{thm:transversal} assuming Theorem~\ref{thm:C1}] Assume that the second alternative in the conclusion of Theorem~\ref{thm:transversal} does not hold. Then (i) and hence (ii)-(v) all hold in Theorem~\ref{thm:C1} (for $\lambda=1$). By (iv), $f$ is real analytically cohomologous to constant since $h_{\mathbf{0}}$ is real analytic. By (ii), $h_{\textbf{i}}=h_{\textbf{j}}$ for all $\mathbf{i},\mathbf{j}\in \Sigma_d$. Thus the first alternative in the conclusion of Theorem~\ref{thm:transversal} holds.
\end{proof}
\subsection{Group structure of \texorpdfstring{$\Sigma_d$}{the symbolic space}}\label{subsec:closedsug}
Let us recall the well-known structure of $\Sigma_d$ as a compact abelian group.
%The discussion in this subsection is well known. See, for example, \cite[\S~10]{HR79}.
The topology on $\Sigma_d$ is the product topology of the discrete topology on $\{0,1,\ldots, d-1\}$.
%(\textcolor{red}{a compatible?}) distance function $\rho$ (\textcolor{red}{It seems that this distance function is never used. Is it necessary to %introduce it?}) defined as
%$$\rho(\mathbf{i},\mathbf{j})=\sum_{n=1}^\infty \frac{|i_n-j_n|}{d^n}.$$
The space $\Sigma_d$ becomes a compact abelian group with addition defined as follows. %\marginpar{``follow" is changed to ``follows".}
For $\mathbf{i}=(i_n)_{n\ge 1},\mathbf{j}=(j_n)_{n\ge 1}\in\Sigma_d$, $\mathbf{i}+\mathbf{j}\in\Sigma_d$ is the unique element $\mathbf{k}=(k_n)_{n\ge 1}\in \Sigma_d$ such that for each $n=1,2,\ldots$,
\[
k_1+k_2d+\cdots +k_{n} d^{n-1} = (i_1+j_1)+(i_2+j_2) d+ \cdots +(i_{n}+j_n) d^{n-1} \mod d^n.
\]
Note that the semi-group $\mathbb{Z}_+=\{0,1,2,\cdots\}$ can be naturally embedded into  $\Sigma_d$ as a dense sub-semi-group in the following way:
\[
m=i_1+i_2d+\cdots + i_nd^{n-1} \mapsto (i_1,\cdots, i_n,0,0,\cdots)=:\iota(m).
\]
In particular, $\mathbf{0}=\iota(0)$ is the zero element of $\Sigma_d$. The map $\mathbf{i}\mapsto \mathbf{i} +\iota(1)$ is usually called a ($d$-adic) {\em adding machine}.

The following lemma is a simple observation which plays an important role in our proof.
\begin{lem}\label{lem:add}
Given $\mathbf{i},\mathbf{j},\mathbf{k}\in\Sigma_d$, $h_{\mathbf{i}}=h_{\mathbf{j}}$ if and only if $h_{\mathbf{i}\mathbf{+}\mathbf{k}}=h_{\mathbf{j}+\mathbf{k}}$ . As a result, $G_0:=\{\mathbf{i}:h_{\mathbf{i}}=h_{\mathbf{0}}\}$ is a closed subgroup of $\Sigma_d$.
\end{lem}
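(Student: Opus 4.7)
The plan is to reduce the ``iff'' statement to the case $\mathbf{k}=\iota(m)$ for $m\in\mathbb{Z}_+$, and then extend to arbitrary $\mathbf{k}\in\Sigma_d$ by density and continuity. The main identity I would establish is
\[
h_{\mathbf{i}+\iota(m)}(x) \;=\; h_{\mathbf{i}}(x+m)-h_{\mathbf{i}}(m), \qquad m\in\mathbb{Z}_+,\ x\in\mathbb{R}.
\]

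To prove this identity, I would first observe, by a straightforward induction using $\widehat T(y+1)=\widehat T(y)+d$, that $\tau_{\mathbf{i},n}(x)=\tau^n(x+m_n(\mathbf{i}))$, where $m_n(\mathbf{i}):=i_1+i_2d+\cdots+i_nd^{n-1}$. The same relation yields $\tau^n(z+d^n)=\tau^n(z)+1$, so combined with the $1$-periodicity of $\widehat f$, the function $F_n:=\widehat f\circ\tau^n$ is $d^n$-periodic. Since the addition on $\Sigma_d$ is defined so that $m_n(\mathbf{i}+\iota(m))\equiv m_n(\mathbf{i})+m\pmod{d^n}$, the $d^n$-periodicity of $F_n$ gives
\[
\widehat f(\tau_{\mathbf{i}+\iota(m),n}(x)) \;=\; F_n(x+m_n(\mathbf{i})+m) \;=\; \widehat f(\tau_{\mathbf{i},n}(x+m)).
\]
Substituting this into the series defining $h_{\mathbf{i}+\iota(m)}(x)$, and applying the same computation at the basepoint $x=0$ (which produces $\widehat f(\tau_{\mathbf{i},n}(m))$), yields the identity.

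With the identity in hand, the implication $h_{\mathbf{i}}=h_{\mathbf{j}}\Rightarrow h_{\mathbf{i}+\iota(m)}=h_{\mathbf{j}+\iota(m)}$ is immediate: both sides equal $h_{\mathbf{i}}(\cdot+m)-h_{\mathbf{i}}(m)=h_{\mathbf{j}}(\cdot+m)-h_{\mathbf{j}}(m)$. To treat a general $\mathbf{k}\in\Sigma_d$, I would set $\mathbf{k}_l:=\iota(m_l(\mathbf{k}))$, so $\mathbf{k}_l\to\mathbf{k}$ in $\Sigma_d$. Continuity of $\mathbf{i}\mapsto h_{\mathbf{i}}$, uniform on compact subsets of $\mathbb{R}$, then gives $h_{\mathbf{i}+\mathbf{k}_l}\to h_{\mathbf{i}+\mathbf{k}}$ and similarly for $\mathbf{j}$, so the equality passes to the limit. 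This continuity follows from the $C^\alpha$ bound on $\widehat f$ and the uniform contraction $|\tau_{\mathbf{i},n}'|\le C\lambda_1^{-n}$: when $\mathbf{i},\mathbf{j}$ agree in their first $N$ coordinates, $h_{\mathbf{i}}-h_{\mathbf{j}}$ is a tail sum over $n>N$, bounded by a constant times $(|\lambda|/\lambda_1^\alpha)^N$ on any compact $x$-set, which tends to $0$ by the hypothesis $|\lambda|<\lambda_0^\alpha$ (choose $\lambda_1<\lambda_0$ so that $|\lambda|<\lambda_1^\alpha$). The reverse direction of the ``iff'' follows by applying the forward direction with $-\mathbf{k}$ in place of $\mathbf{k}$.

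For the corollary that $G_0:=\{\mathbf{i}:h_{\mathbf{i}}=h_{\mathbf{0}}\}$ is a closed subgroup, specialize the ``iff'' to $\mathbf{j}=\mathbf{0}$: closure under addition follows since $\mathbf{i}_1\in G_0$ and $\mathbf{i}_2\in G_0$ give $h_{\mathbf{i}_1+\mathbf{i}_2}=h_{\mathbf{0}+\mathbf{i}_2}=h_{\mathbf{i}_2}=h_{\mathbf{0}}$; closure under inversion is the analogous statement with $\mathbf{k}=-\mathbf{i}$; and topological closedness is precisely the continuity established above. The only subtle point in the whole argument is spotting the $d^n$-periodicity of $F_n$, which is where the group structure on $\Sigma_d$ interacts cleanly with the contraction $\tau$; once this is observed, the rest is bookkeeping and a standard density/continuity argument.
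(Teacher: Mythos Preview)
Your proof is correct and follows essentially the same approach as the paper: establish the translation identity for $\mathbf{k}=\iota(m)$ via the relation $\tau_{\mathbf{i},n}(x+m)\equiv\tau_{\mathbf{i}+\iota(m),n}(x)\pmod 1$ (which you phrase equivalently via the $d^n$-periodicity of $\widehat f\circ\tau^n$), then pass to general $\mathbf{k}$ by density of $\iota(\mathbb{Z}_+)$ and continuity of $\mathbf{i}\mapsto h_{\mathbf{i}}$. Your version is in fact slightly more careful than the paper's, which writes $h_{\mathbf{i}}(x+m)=h_{\mathbf{i}+\iota(m)}(x)$ without the normalizing constant $-h_{\mathbf{i}}(m)$; this omission is harmless for the implication being proved, but your formulation is the precise one.
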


\begin{proof}
Since $\Sigma_d$ is a group, we only need to prove the ``only if'' part.  By definition,
%we have %(\textcolor{red}{recall $\iota$?}):
%\[
%\tau_{\mathbf{i},n}(x+1)=\tau_{\mathbf{i}+\iota(1),n}(x) \mod 1, \quad\forall~\mathbf{i}, ~\forall~n \ge 1~,\forall~x\in\mathbb{R}.
%\
for each positive integer $m$,
%(\textcolor{red}{To me the displayed line above is not much more evident than the line below. Might the line above be removed?})
\[
\tau_{\mathbf{i},n}(x+m)=\tau_{\mathbf{i}+\iota(m),n}(x) \mod 1, \quad\forall~\mathbf{i}, ~\forall~n \ge 1~,\forall~x\in\mathbb{R}.
\]
Since $\widehat{f}$ is of period $1$, it follows that
%\[
%\tau_{\imath,n}'(x+\kappa_n)\cdot f'(\tau_{\imath,n}(x+\kappa_n)) = \tau_{\imath+\kappa,n}'(x)\cdot f'(\tau_{\imath+\kappa,n}x) %\quad\forall~\imath,\kappa,~\forall~n \ge 1~,\forall~x,y\in\mathbb{R}.
%\]
for $m\ge 1$, we have:
\[
h_{\mathbf{i}}(x+m)=h_{\mathbf{i}+\iota(m)}(x).
\]
In particular,
\[
h_{\mathbf{i}}=h_{\mathbf{j}}\Rightarrow h_{\mathbf{i}+\iota(m)}=h_{\mathbf{j}+\iota(m)}.
\]
Since $\iota(\mathbb{Z}_+)$ is dense in $\Sigma_d$ and since $\mathbf{k}\mapsto h_{\mathbf{k}}(x)$ is continuous for any fixed $x\in\mathbb{R}$, it follows that
$h_{\mathbf{i}}=h_{\mathbf{j}}$ implies that $h_{\mathbf{i}+\mathbf{k}}=h_{\mathbf{j}+\mathbf{k}}$. %The other direction follows similarly.

In particular, $G_0$ is a subgroup of $\Sigma_d$. Its closedness follows from the fact that $\mathbf{i}\mapsto h_{\mathbf{i}}(x)$ is continuous for each $x$.
\end{proof}

%On the other hand, we have:
%\iffalse
%\begin{lem}\label{lem:period}
%  If $h$ is $qd^{n}$-periodic for some $q\mid d$ and some $n\ge 0$, then $h$ is $1$-periodic.
%\end{lem}
%
%\begin{proof} From the definition of $h=h_{\mathbf{0}}$, we obtain
%\begin{equation}\label{eq:co-homo}
%  \whT'\cdot h\circ \whT=\sum_{k=1}^\infty \Big((f\circ \tau^{k})\circ \whT\Big)'=f'+h.
%\end{equation}
%Since both $f'$ and $\whT'$ are $1$-periodic and since $h(\whT(x+r))=h(\whT(x)+rd)$ for any $r\in\mbbn$, the equality above implies that: if $h$ is $p$-periodic and if $p\mid rd$, then $h$ is also $r$-periodic. The conclusion follows.
%\end{proof}
%\fi

%We shall prove the following proposition in the next subsection using Fourier analysis.

\begin{prop}\label{prop:C1(i)(iii)}
Under the circumstances of Theorem~\ref{thm:C1}, (i) implies (iii).
\end{prop}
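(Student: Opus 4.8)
The plan is to analyse the closed subgroup $G_0=\{\mathbf{i}:h_{\mathbf{i}}\equiv h_{\mathbf{0}}\}$ of Lemma~\ref{lem:add}, show it must be either $\{\mathbf{0}\}$ or all of $\Sigma_d$, and then read (iii) off the case $G_0=\Sigma_d$. First, if (i) holds, choose $\mathbf{i}\ne\mathbf{j}$ with $h_{\mathbf{i}}\equiv h_{\mathbf{j}}$; applying Lemma~\ref{lem:add} with $\mathbf{k}=-\mathbf{j}$ gives $h_{\mathbf{i}-\mathbf{j}}\equiv h_{\mathbf{0}}$, so $\mathbf{i}-\mathbf{j}\in G_0\setminus\{\mathbf{0}\}$ and $G_0$ is a nontrivial closed subgroup of $\Sigma_d$. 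It therefore suffices to prove (a) $G_0\ne\{\mathbf{0}\}\Rightarrow G_0=\Sigma_d$, and (b) $G_0=\Sigma_d\Rightarrow$ (iii).

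For (a) the key point is a \emph{self-similarity} of $G_0$. Writing $0\mathbf{j}$ for the sequence obtained by prepending the symbol $0$ to $\mathbf{j}$ (so that $0\mathbf{j}=d\mathbf{j}$ in the group $\Sigma_d$), and using the standing assumption $\whT(0)=0$, hence $\tau_0(0)=0$, a regrouping of the defining series gives
\[
h_{0\mathbf{j}}(x)=\lambda\bigl(\widehat{f}(\tau_0(x))-\widehat{f}(0)\bigr)+\lambda\,h_{\mathbf{j}}(\tau_0(x)),\qquad x\in\mathbb{R}.
\]
Subtracting two such identities, $h_{0\mathbf{i}}-h_{0\mathbf{j}}=\lambda\,(h_{\mathbf{i}}-h_{\mathbf{j}})\circ\tau_0$; since $\tau_0\colon\mathbb{R}\to\mathbb{R}$ is a homeomorphism and $\lambda\ne0$, this means $h_{0\mathbf{i}}\equiv h_{0\mathbf{j}}\iff h_{\mathbf{i}}\equiv h_{\mathbf{j}}$, and with $\mathbf{j}=\mathbf{0}$: $d\mathbf{i}\in G_0\iff\mathbf{i}\in G_0$. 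Thus $G_0$ is stable under the injective ``multiplication by $d$'' map on $\Sigma_d$ and under its set-theoretic preimage.

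When $d=p$ is prime this already finishes (a): $\Sigma_d\cong\mathbb{Z}_p$, whose closed subgroups are $\{\mathbf{0}\}$ and the $p^k\mathbb{Z}_p$, and the self-similarity rules out $p^k\mathbb{Z}_p$ for $k\ge1$ (there $p^k\in G_0$ but $p^{k-1}\notin G_0$); so $G_0\in\{\{\mathbf{0}\},\Sigma_d\}$ and, being nontrivial, $G_0=\Sigma_d$. For general $d$ I would dualise: $\widehat{\Sigma_d}$ is the discrete group $\mathbb{Z}[1/d]/\mathbb{Z}=\bigoplus_{p\mid d}\mathbb{Z}(p^\infty)$, and the self-similarity translates into $d\,G_0^{\perp}=G_0^{\perp}$, which forces $G_0^{\perp}=\bigoplus_{p\in S}\mathbb{Z}(p^\infty)$ for some $S\subseteq\{p:p\mid d\}$. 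The abstract group structure alone does \emph{not} exclude a proper nonempty $S$ (for $d=6$, say, $\mathbb{Z}_2\times\{0\}$ is self-similar), so here one genuinely needs Fourier analysis on $\Sigma_d$: expanding $\mathbf{i}\mapsto h_{\mathbf{i}}(x)$ into characters — the $n$-th term of the series depends only on $i_1,\dots,i_n$, hence carries only characters of order dividing $d^n$ — and using the recursion above to control which characters must co-occur, one shows $G_0^{\perp}$ cannot be a proper nonzero sub-sum; thus $G_0^{\perp}=\{\mathbf{0}\}$, i.e. $G_0=\Sigma_d$. This last step is where I expect the real work to be.

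Finally, (b): $G_0=\Sigma_d$ is exactly statement (ii), $h_{\mathbf{i}}\equiv h_{\mathbf{0}}$ for all $\mathbf{i}$; in particular $h_{\iota(1)}\equiv h_{\mathbf{0}}$. The periodicity relation underlying Lemma~\ref{lem:add} gives $h_{\mathbf{0}}(x+1)=h_{\iota(1)}(x)+h_{\mathbf{0}}(1)=h_{\mathbf{0}}(x)+h_{\mathbf{0}}(1)$, so $h_{\mathbf{0}}(n)=n\,h_{\mathbf{0}}(1)$ for every $n\in\mathbb{N}$. On the other hand, the contraction estimate used to define the $h_{\mathbf{i}}$ (namely $\sup_{x,\mathbf{i}}|\tau'_{\mathbf{i},n}(x)|\le C\lambda_1^{-n}$, together with $|\lambda|<\lambda_0^{\alpha}$) yields $|h_{\mathbf{0}}(x)|\le C'|x|^{\alpha}$ for every $x\in\mathbb{R}$, with $\alpha<1$; comparing the two estimates forces $h_{\mathbf{0}}(1)=0$. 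Hence $h_{\mathbf{0}}(x+1)=h_{\mathbf{0}}(x)$, i.e. $h_{\mathbf{0}}$ is $1$-periodic, which is (iii). (If Lemma~\ref{lem:add} is invoked in its stated form $h_{\mathbf{0}}(x+1)=h_{\iota(1)}(x)$, then (iii) is immediate from (ii) without the growth estimate.)
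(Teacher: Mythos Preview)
Your outline is correct for prime $d$ and essentially matches the paper's short argument there. The genuine gap is exactly where you flag it: for composite $d$ you do not carry out the ``real work''. Your dualisation is fine --- $\widehat{\Sigma_d}\cong\mathbb{Z}[1/d]/\mathbb{Z}$, and the self-similarity $d\mathbf{i}\in G_0\iff\mathbf{i}\in G_0$ indeed permits proper intermediate subgroups such as $\mathbb{Z}_2\times\{0\}\subset\mathbb{Z}_6$ --- but the vague proposal to ``expand $\mathbf{i}\mapsto h_{\mathbf{i}}(x)$ into characters and use the recursion'' does not by itself exclude those subgroups: the obstruction is that $G_0$ is a \emph{level set}, and knowing the Fourier support of $\mathbf{i}\mapsto h_{\mathbf{i}}(x)$ on $\Sigma_d$ says little about which subgroup that level set is. Something that genuinely couples the $\Sigma_d$-variable with the $x$-variable is needed.

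The paper supplies this coupling by a rather different Fourier argument. It first linearises $T$ (conjugating to $x\mapsto dx$) and then works on the solenoid $\Sigma_d\times[0,1)$ with its Haar-type measure $\mu$, not on $\Sigma_d$ alone. Setting $\mathcal{H}^s(\mathbf{k},x)=H_{\mathbf{k}}(x+s)-H_{\mathbf{k}}(x)$, one has the scaling relation $\mathcal{H}^{ds}\circ m_d=\lambda\,\mathcal{H}^s$ and, for every $\mathbf{i}\in G_0$, the invariance $\mathcal{H}^s(\mathbf{i}+\mathbf{k},x)=\mathcal{H}^s(\mathbf{k},x)$. Testing $\mathcal{H}^s$ against the characters $E_{m,q}(\mathbf{k},x)=e^{2\pi i q\,\xi_m(\mathbf{k},x)}$, the $G_0$-invariance forces $c_{m,q}^s=e^{2\pi i q z_m}c_{m,q}^s$ with $z_m=(i_1+\cdots+i_m d^{m-1})/d^m$ for any fixed nonzero $\mathbf{i}\in G_0$. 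The decisive point is the elementary Lemma~\ref{lem:bwirr}: for any nonzero $\mathbf{i}$ and any $q\ne0$, one has $qz_m\not\equiv 0\pmod 1$ for all large $m$, so $c_{m,q}^s=0$ eventually in $m$; the scaling relation then propagates this to all $m$, and a martingale argument gives that $\mathcal{H}^s$ is constant. Finally $|\lambda|<d$ forces the constant to be $0$ (your sublinear-growth argument in part~(b) is a valid alternative to this last step), yielding (iii). Note that the paper goes directly (i)$\Rightarrow$(iii) without first establishing $G_0=\Sigma_d$; your route (i)$\Rightarrow$(ii)$\Rightarrow$(iii) would also work, but only once part~(a) is actually proved.
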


The proof of this proposition will be given in the next subsection, using Fourier analysis. Here we give a short proof in the case that $d$ is a prime.
%\marginpar{The red-colored sentences in proof above looks unclear to me. I can see why such $q_m$ exists by choosing a generator (whose existence %might need explanation) $i_1i_2\cdots $ of $G_0$ and then letting $q_m=i_1+i_2d+\cdots+i_m d^{m-1}$; however, this does not guarantee that $q_1=1$. %A proposed revision of the proof is as below.}
%\begin{proof}[Proof of Proposition~\ref{prop:C1(i)(iii)} assuming that $d$ is a prime] We shall prove that
%%$h_{\mathbf{0}}$ is $1$-periodic.
%By Lemma~\ref{lem:add}, $G_0$ is a non-trivial closed subgroup of $\Sigma_d$. For each $m\ge 1$,
%$$A_m:=\{i_1+i_2d+\cdots+i_m d^{m-1} \mod d^m: (i_n)_{n\ge 1}\in G_0\}$$
%is a subgroup of the cyclic group $\mathbb{Z}/(d^m \mathbb{Z})$. \textcolor{red}{So there exists $q_m\in \{0,1,\ldots, d^m-1\}$ such that $A_m$ is %generated by $q_m \mod d^m$ and $q_{m+1}=q_m \mod d^m$}. The group $G_0$ also has the following property:
%$$0i_1i_2\cdots\in G_0\implies i_1i_2\cdots\in G_0.$$
%Thus $q_m\not=0$ for all $m$. \textcolor{red}{Since $d$ is a prime, $q_1=1$}. From $q_{m+1}=q_m\mod d^m$, we obtain by induction that $q_m=1$ for %all $m$. Since $G_0$ is closed, it follows that $G_0=\Sigma_d$.
%%there exists an integer $k\ge 1$, such that
%$$G_0=\{(i_n)_{n=1}^\infty\in \Sigma_d: i_n=0 \mbox{ for all } n<k\}.$$
%On the other hand, if $0\mathbf{i}\in G_0$, then $\mathbf{i}\in G_0$. Thus $k=1$, i.e., $G_0=\Sigma_d$.
%\end{proof}

\begin{proof}[Proof of Proposition~\ref{prop:C1(i)(iii)} assuming that $d$ is a prime]  It suffices to prove that $\iota(1)\in G_0=\{\mathbf{i}\in \Sigma_d: h_{\mathbf{i}}=h_{\mathbf{0}}\}$, because $h_{\iota(1)}=h_{\mathbf{0}}$ means that $h_{\mathbf{0}}$ is $1$-periodic.
Since (i) holds, by Lemma~\ref{lem:add}, $G_0$ is a non-trivial closed subgroup of $\Sigma_d$. For each $m\ge 1$,
$$A_m:=\{i_1+i_2d+\cdots+i_m d^{m-1} \mod d^m: (i_n)_{n\ge 1}\in G_0\}$$
is a subgroup of the cyclic group $\mathbb{Z}/(d^m \mathbb{Z})$. Since the group $G_0$ also has the following property:
$$0i_1i_2\cdots\in G_0\implies i_1i_2\cdots\in G_0,$$
$A_m$ is non-trivial for each $m$. Since $d$ is prime, there exists a unique $k_m\in \{0,1,\ldots, m-1\}$ such that
$A_m$ is generated by $d^{k_m} \mod d^m$.  In particular, $k_1=0$. Since $j\mod d^{m+1}\mapsto j\mod d^m$ induces a surjective homomorphism from $A_{m+1}$ to $A_m$, $d^{k_{m+1}}\mod d^m$ is also a generator of $A_m$, and thus $k_{m+1}=k_m$. In conlcusion, $k_m=0$ for all $m\ge 1$. Since $G_0$ is closed, it follows that $\iota(1)\in G_0$.
\end{proof}

\begin{proof}[Proof of Theorem~\ref{thm:C1}]
(ii) $\implies$ (i). This is trivial.

(i) $\implies$ (iii). This is Proposition~\ref{prop:C1(i)(iii)}.

%By Lemma~\ref{lem:add}, $G_0=\{\mathbf{i}: h_{\mathbf{i}}=h\}$ is non-trivial closed subgroup of $\Sigma_d$. By Proposition~\ref{prop:struc}, $G_0$ %is generated by $\iota(qd^{n})$ for some $q \mid d$ and $n\ge 0$. In particular, $h(x)=h_{\iota(qd^n)}(x)=h(x+qd^n)$, i.e., $h$ is $qd^n$-periodic. %By Lemma~\ref{lem:period}, $h$ is $1$-periodic.
%% By follows from the three lemmas above.

(iii) $\implies $ (iv).  Let $\widehat{f}_0=\widehat{f}-\widehat{f}(0)$. By definition,
\[
h_{\mathbf{0}}=\sum_{n=1}^\infty \lambda^n\cdot \widehat{f}_0\circ \tau^n.
\]
Then
\[
h_{\mathbf{0}}\circ\whT = \sum_{n=1}^\infty \lambda^n \cdot \widehat{f}_0\circ \tau^{n-1} = \lambda(\widehat{f}_0 + h_{\mathbf{0}}).
\]

(iv) $\implies$ (v). This is trivial.
%Thus (iv) holds with $\phi=h$.
%Let $H(x)=\int_0^x h(t) dt$. Integrating both sides of (\ref{eq:co-homo}), we obtain
%$$H\circ \whT =f+ H -f(0).$$
%Let us show that $H$ is $1$-periodic. Since $h$ is $1$-periodic, it suffices to show that $H(1)=0$. By the last displayed equation,
%$H\circ\whT -H$ is $1$-periodic, so $H(\whT(1))-H(1)=H(\whT(0))-H(0)=0$. Since $H(\whT(1))=H(d)=dH(1)$, it follows that $H(1)=0$.

%{\color{red} I suggest ``(iii) $\implies $ (iv)" be shortened as something like this:  Let $\phi(x)=\int_0^x h(t) dt$. Integrating both sides of (\ref{eq:co-homo}) on $[0,x]$ and $[0,1]$ respectively, we obtain
%$$\phi\circ \whT =f+ \phi -f(0) \quad\text{and}\quad \int_0^d h(t)\dif t = \int_0^1 h(t)\dif t=\phi(1).$$
%The conclusion follows.}

(v) $\implies$ (ii). Denote $\tau_{\mathbf{i}, 0}=\mathrm{id}$ and $\widehat{\phi}=\phi\circ\pi$. There exists $c\in\mathbb{C}$ such that for any $\mathbf{i}$ and any $n\ge 1$, %\marginpar{The red-colored phrase is added. $\phi$ are changed to $\widehat{\phi}$ accordingly below.}
%\[
%\widehat{f}\circ \tau_{\mathbf{i},n}= \lambda^{-1}\phi\circ \widehat{T}\circ\tau_{\mathbf{i}, n} -\phi\circ \tau_{\mathbf{i},n}+ c = \lambda^{-1}\phi\circ \tau_{\mathbf{i}, n-1} -\phi\circ \tau_{\mathbf{i},n}+ c.
%\]
\[
\widehat{f}\circ \tau_{\mathbf{i},n}= \lambda^{-1} \widehat{\phi} \circ \widehat{T}\circ\tau_{\mathbf{i}, n} - \widehat{\phi}\circ \tau_{\mathbf{i},n}+ c = \lambda^{-1} \widehat{\phi} \circ \tau_{\mathbf{i}, n-1} - \widehat{\phi} \circ \tau_{\mathbf{i},n}+ c.
\]
It can be rewritten as
%\[
%\widehat{f}\circ \tau_{\mathbf{i},n} - \widehat{f}\circ \tau_{\mathbf{i},n}(0)=\lambda^{-1}(\phi\circ \tau_{\mathbf{i}, n-1}-\phi\circ \tau_{\mathbf{i}, n-1}(0))- (\phi\circ \tau_{\mathbf{i},n}-\phi\circ \tau_{\mathbf{i},n}(0)).
%\]
\[
\widehat{f}\circ \tau_{\mathbf{i},n} - \widehat{f}\circ \tau_{\mathbf{i},n}(0)=\lambda^{-1}(\widehat{\phi}\circ \tau_{\mathbf{i}, n-1} - \widehat{\phi}\circ \tau_{\mathbf{i}, n-1}(0))- (\widehat{\phi}\circ \tau_{\mathbf{i},n} - \widehat{\phi}\circ \tau_{\mathbf{i},n}(0)).
\]
It follows that
$$h_{\mathbf{i}}= \widehat{\phi} - \widehat{\phi}(0)$$
does not depend on $\mathbf{i}$.
%$$(f\circ \tau_{\mathbf{i},n})'= (\phi\circ \tau_{\mathbf{i}, n-1})' -(\phi\circ \tau_{\mathbf{i},n})'.$$
%Therefore,
%$h_{\mathbf{i}}=\phi'$ is independent of $\mathbf{i}$.
%
%{\color{red} I suggest ``(iv) $\implies $ (ii)" be shortened as follows:  For any $\mathbf{i}$ and any $n\ge 1$,
%$$(f\circ \tau_{\mathbf{i,n}})'= (\phi\circ \widehat{T}\circ\tau_{\mathbf{i}, n} -\phi\circ \tau_{\mathbf{i},n})'= (\phi\circ \tau_{\mathbf{i}, %n-1})' -(\phi\circ \tau_{\mathbf{i},n})',$$
%where $\tau_{\mathbf{i}, 0}=id$. Therefore,
%$h_{\mathbf{i}}=\phi'$ is independent of $\mathbf{i}$.}

% (ii) $\iff$ (iii) follows from the first two lemmas. (iii) $\iff$ (iv) follows easily from \eqref{eq:co-homo} (in (iv) $\phi'=h$).
\end{proof}

\subsection{Fourier analysis}
The goal of this subsection is to prove Proposition~\ref{prop:C1(i)(iii)}.

We first describe a procedure which reduces the problem to the case that $T$ is linear. It is well-known that $T$ is topologically conjugate to the linear map $\mathbf{m}_d: x\mapsto dx \mod 1$ via a homeomorphism $\theta:\mathbb{R}/\mathbb{Z}\to \mathbb{R}/\mathbb{Z}$ with $\theta(0)=0$; in particular,  $\theta\circ T= \mathbf{m}_d\circ\theta$. Let $\Theta:\mathbb{R}\to\mathbb{R}$ be the lift of $\theta$ with $\Theta(0)=0$. Then
\[
\Theta\circ \widehat{T}= d\cdot\Theta,
\]
which implies that
$$\Theta\circ \tau_{\mathbf{i},n} \circ \Theta^{-1}(x)= \frac{x+i_1+i_2 d+ \cdots +i_n d^{n-1}}{d^n}=:\xi_n(\mathbf{i},x).$$
Let $F=\widehat{f}\circ \Theta^{-1}$ and let
\[
H_{\mathbf{i}}(x):=h_{\mathbf{i}}\circ\Theta^{-1}(x)=\sum_{m=1}^\infty \lambda^m [F(\xi_m(\mathbf{i},x))-F(\xi_{m}(\mathbf{i},0))].
\]
By definition, we have the following.
\begin{itemize}
  \item $F$ is $1$-periodic.
  \item $h_{\mathbf{i}}$  is $1$-periodic  if and only if $H_{\mathbf{i}}$  is $1$-periodic.
  \item $h_{\mathbf{i}}\equiv h_{\mathbf{j}}$ if and only if  $H_{\mathbf{i}}\equiv H_{\mathbf{j}}$.
  \item If $T=\mathbf{m}_d$, then $\Theta=\mathrm{id}$ and $h_{\mathbf{i}}=H_{\mathbf{i}}$.
\end{itemize}

For each $s\in \mathbb{Z}$, consider
$$\mathcal{H}^s: \Sigma_d\times[0,1)\to\mathbb{C}, \quad (\mathbf{k},x)\mapsto H_{\mathbf{k}}(x+s)-H_{\mathbf{k}}(x).$$
To prove Proposition~\ref{prop:C1(i)(iii)}, we shall use Fourier analysis to show that $\mathcal{H}^s$ is constant for each $s\in\mathbb{Z}$.

The space $\Sigma_d\times[0,1) $ carries a unique Borel probability measure $\mu$ such that for any $m\ge 1$ and any Borel set $U\subset [0,1)$,
$$\mu(\{(\mathbf{k},x)\in \Sigma_d\times[0,1) : \xi_m(\mathbf{k},x)\in U\})=|U|,$$
where $|U|$ denote the standard Lebesgue measure of $U$. Moreover, there is a $\mu$-preserving Borel measurable bijection $m_d: \Sigma_d\times[0,1) \to \Sigma_d\times[0,1) $ defined as
\[
m_d(\mathbf{k},x)= (k_0\mathbf{k},dx-k_0),
\]
where $k_0=\lfloor dx\rfloor$ is the largest integer which is not greater than $dx$, and $k_0\mathbf{k}=(k_n)_{n\ge 0}$ for $\mathbf{k}=(k_n)_{n\ge 1}$.

%Then $m_d$ is a Borel measurable bijiection
%\begin{equation}\label{eqn:mdmu}
%m_d(\mu)=\mu.
%\end{equation}
Indeed, $\xi:(\mathbf{k},x)\mapsto (\xi_m(\mathbf{k},x))_{m\ge 0}$ provides a natural identification between $\Sigma_d\times[0,1) $ and the
space $X(d)$ of backward orbits of $\mathbf{m}_d$:
$$X(d)=\{(x_m)_{m=0}^\infty: x_m\in \mathbb{R}/\mathbb{Z}, \mathbf{m}_d(x_{m+1})=x_m, \forall m\ge 0\}.$$
The map $m_d$ corresponds to the homeomorphism $(x_m)_{m\ge 0}\mapsto (\mathbf{m}_d x_m)_{m\ge 0}$ and $\mu$ corresponds to the lift of the $\mathbf{m}_d$-invariant Lebesgue measure on $\mathbb{R}/\mathbb{Z}$ to the space $X(d)$.
Let us note that the measure $\mu$ corresponds to the Haar measure on the compact abelian group $X(d)$, although we do not need this fact explicitly.

\iffalse
\textcolor{red}{(The basic fact below should be well-known.???)}
\begin{lem}\label{lem:basis}
Let $E_{m,q}(\mathbf{k},x)=e^{2\pi i \xi_m(\mathbf{k},x) q}$ for $m\ge 0$, $q\in\mathbb{Z}$, where $\xi_0(\mathbf{k},x)=x$. Then
\[
\mathcal{E}:=\{E_{m,q}: m\ge 0,  q\in \mathbb{Z}, d\nmid q\}\cup\{1\}
\]
is an orthonormal basis for $L^2(\Sigma_d\times[0,1) , \mu)$. Moreover, for any $n\ge 0$, $\mathcal{E}=\{E_{m,q}: q\in\mathbb{Z}, m\ge n\}$.
\end{lem}
\begin{proof} Clearly, for each $E_{m,q}\in \mathcal{E}$, $\int |E_{m,q}|^2 d\mu=1$. For $E_{m_1,q_1}, E_{m_2,q_2}\in \mathcal{E}$ with $(m_1,q_1)\not=(m_2,q_2)$, let us show
$$\int_{\Sigma_d\times[0,1) } E_{m_1,q_1} \overline{E_{m_2,q_2}} d\mu=0.$$
To this end, assume without loss of generality $m_1\ge m_2$. Then $q_1- d^{m_1-m_2}q_2\not=0$. Since $\xi_{m_2}=d^{m_1-m_2} \xi_{m_1} \mod 1$ and $(\xi_{m_1})_* \mu= dx$,
\begin{align*}
\int_{\Sigma_d\times[0,1) } E_{m_1, q_1} \overline{E_{m_2,q_2}} d\mu & =\int_{\Sigma_d\times[0,1) } e^{2\pi i(\xi_{m_1} q_1-\xi_{m_2} q_2)} d\mu\\
& =\int_{\Sigma_d\times[0,1) } e^{2\pi i \xi_{m_1} (q_1 -d^{m_1-m_2} q_2)}d\mu\\
& =\int_0^1 e^{2\pi i x(q_1-d^{m_1-m_2} q_2) } d(\xi_{m_1})_* \mu(x)\\
& =\int_0^1 e^{2\pi i x(q_1-d^{m_1-m_2} q_2)} dx =0.
\end{align*}
\textcolor{red}{(The paragraph above was NOT checked. The paragraph below was checked.)}

It remains to show that any function $\varphi$ in $L^2(\Sigma_d\times[0,1) , \mu)$ can be (suitably) approximated by linear combinations of functions in $\mathcal{E}$.
\end{proof}
\fi

The following lemma describes symmetry of the functions $\mathcal{H}^s$.
\begin{lem}\label{lem:symmetry} For any $s\in \mathbb{Z}$,
\begin{equation}\label{eqn:calHd}
\mathcal{H}^{ds}\circ m_d =\lambda \cdot\mathcal{H}^s.
\end{equation}
Moreover, for any $\mathbf{i}\in G_0$, we have
\begin{equation}\label{eqn:grp2}
\mathcal{H}^s(\mathbf{i}+\mathbf{k},x)=\mathcal{H}^s(\mathbf{k},x), \quad \forall\,(\mathbf{k},x)\in\Sigma_d\times[0,1).
\end{equation}
\end{lem}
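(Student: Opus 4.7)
\textbf{Proof plan for Lemma~\ref{lem:symmetry}.} The two identities are of quite different flavors. Equation~(\ref{eqn:calHd}) is a direct computation that exploits the self-similarity of the base-$d$ expansion $\xi_m(\mathbf{k},x)=(x+k_1+k_2d+\cdots+k_md^{m-1})/d^m$ under the adding-machine--style map $m_d$, together with the $1$-periodicity of $F$. Equation~(\ref{eqn:grp2}), on the other hand, is essentially already contained in Lemma~\ref{lem:add} once the definitions are unwound. Neither step presents a real obstacle; the point is just to check that the shift of index in the infinite sum defining $H_{\mathbf{k}}$ lines up correctly.

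For the scaling identity~(\ref{eqn:calHd}), the plan is to start from the explicit formula for $\xi_m$ and verify by inspection the key recursion
\[
\xi_m\bigl(k_0\mathbf{k},\,dx-k_0+dt\bigr)=\xi_{m-1}(\mathbf{k},x+t) \qquad (m\ge 1,\ t\in\mathbb{R}),
\]
since substituting $dx-k_0+dt$ for the free variable cancels $k_0$ with the first digit of $k_0\mathbf{k}$ and shifts every other digit down. Applying this with $t=s$ and $t=0$ and taking the difference, the reference terms $F(\xi_m(k_0\mathbf{k},0))$ cancel out, and one obtains
\[
\mathcal{H}^{ds}(m_d(\mathbf{k},x))=\sum_{m=1}^{\infty}\lambda^m\bigl[F(\xi_{m-1}(\mathbf{k},x+s))-F(\xi_{m-1}(\mathbf{k},x))\bigr].
\]
Reindexing by $n=m-1$ produces an extra factor $\lambda$ and a leading $n=0$ term equal to $F(x+s)-F(x)$, which vanishes because $F$ is $1$-periodic and $s\in\mathbb{Z}$. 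The remaining tail is exactly $\lambda\,\mathcal{H}^s(\mathbf{k},x)$, which is the desired identity.

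For the translation invariance~(\ref{eqn:grp2}), the plan is to reduce immediately to Lemma~\ref{lem:add}. Since $\mathbf{i}\in G_0$ means $h_{\mathbf{i}}\equiv h_{\mathbf{0}}$, Lemma~\ref{lem:add} (applied with $\mathbf{j}=\mathbf{0}$ and adding $\mathbf{k}$) gives $h_{\mathbf{i}+\mathbf{k}}\equiv h_{\mathbf{k}}$. Transporting through the conjugacy $\Theta$ yields $H_{\mathbf{i}+\mathbf{k}}\equiv H_{\mathbf{k}}$, and hence
\[
\mathcal{H}^s(\mathbf{i}+\mathbf{k},x)=H_{\mathbf{i}+\mathbf{k}}(x+s)-H_{\mathbf{i}+\mathbf{k}}(x)=H_{\mathbf{k}}(x+s)-H_{\mathbf{k}}(x)=\mathcal{H}^s(\mathbf{k},x).
\]
The only thing one should be careful with is not to confuse the group $G_0$, which is defined in terms of the functions $h_{\mathbf{i}}$ on $\mathbb{R}$, with its image under $\Theta$; since $\Theta$ is a homeomorphism of $\mathbb{R}$, $h_{\mathbf{i}+\mathbf{k}}\equiv h_{\mathbf{k}}$ on $\mathbb{R}$ is equivalent to $H_{\mathbf{i}+\mathbf{k}}\equiv H_{\mathbf{k}}$ on $\mathbb{R}$, and in particular on $[0,1)$ and its integer translates, which is all that is needed.
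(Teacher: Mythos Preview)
Your proposal is correct and follows essentially the same route as the paper: the recursion $\xi_m(k_0\mathbf{k},dy-k_0)=\xi_{m-1}(\mathbf{k},y)$ is exactly the identity the paper uses to collapse the sum for~(\ref{eqn:calHd}), and your treatment of~(\ref{eqn:grp2}) via Lemma~\ref{lem:add} and the conjugacy $\Theta$ mirrors the paper's argument verbatim.
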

\begin{proof} We first prove (\ref{eqn:calHd}). Note that for each $y\in \mathbb{R}$, $m\ge 1$, $k_0\in \{0,1,\ldots, d-1\}$ and $\mathbf{k}\in \Sigma_d$, we have
\[
\xi_m(k_0\mathbf{k},dy-k_0)= \xi_{m-1}(\mathbf{k},y),
\]
where $\xi_0(\mathbf{k},y)=y$. So for $k_0=\lfloor dx\rfloor$,
%\marginpar{$F(k_0\mathbf{k},\xi_m(d(x+s)-k_0))$ is corrected to $F(\xi_m(k_0\mathbf{k},d(x+s)-k_0))$.}
\begin{align*}
\mathcal{H}^{ds} \circ m_d(\mathbf{k},x) & =\mathcal{H}^{ds} ( k_0\mathbf{k}, dx-k_0)\\
& =H_{k_0\mathbf{k}}(d(x+s)-k_0)-H_{k_0\mathbf{k}}(dx-k_0)\\
& =\sum_{m=1}^\infty \lambda^m \cdot \left[F(\xi_m(k_0\mathbf{k},d(x+s)-k_0))-F(\xi_m( k_0\mathbf{k}, dx-k_0))\right]\\
& =\sum_{m=1}^\infty \lambda^m \cdot \left[F(\xi_{m-1} (\mathbf{k},x+s))-F(\xi_{m-1}(\mathbf{k},x))\right]\\
&= \lambda \cdot (F(x+s)-F(x))+\lambda \cdot \mathcal{H}^s (\mathbf{k},x)\\
&= \lambda \cdot \mathcal{H}^s (\mathbf{k},x),
\end{align*}
where the last equality follows from the fact that $F$ is $1$-periodic and $s\in \mathbb{Z}$.

Now let us prove (\ref{eqn:grp2}). Given $\mathbf{i}\in G_0$, by Lemma~\ref{lem:add}, $h_{\mathbf{i}+\mathbf{k}}\equiv h_{\mathbf{k}}$, and hence
$H_{\mathbf{i}+\mathbf{k}}\equiv H_{\mathbf{k}}$ for all $\mathbf{k}\in \Sigma_d$. The equality follows.
\end{proof}
%the functions $\mathcal{H}^s$ satisfies the following identity:

%
%$$X_d=\{\{x_m\}_{m=0}^\infty: x_m\in \mathbb{R}/\mathbb{Z},\, \mathbf{m}_d (x_{m+1})=x_m,\forall m\ge 0\}$$ of backward orbits of $\mathbf{m}_d: %x\mapsto dx\mod 1$
%as follows:
%$$\xi(\mathbf{k},x)=(\xi_n(\mathbf{k},x))_{n=0}^\infty, \mbox{ where }\xi_n(\mathbf{k},x)= \frac{x+k_1+k_2d+\cdots+ k_n d^{n-1}}{d^n} \mod 1.$$
%As a closed subgroup of the compact abelian group $(\mathbb{R}/\mathbb{Z})^{\mathbb{Z}_{\ge 0}}$, the space $X_d$ is itself a compact abelian group. %Let $\mu$ denote the Haar measure on $X_d$. Then for any open set $U$ of $\mathbb{R}/\mathbb{Z}$ and any integer $m\ge 0$,
%$$\mu\left(\{(\xi_n)_{n=0}^\infty\in X_d: \xi_m\in U\} \right)=|U|,$$
%where $|U|$ is the standard length of $U$ in $\mathbb{R}/\mathbb{Z}$. Note that $\mu$ is invariant under the map
%$$m_d:X_d\to X_d, \, m_d((\xi_m)_{m\ge 0})= (d \xi_m\mod 1)_{m\ge 0}.$$

%Note also that for each $m\ge 0$,
%\begin{equation}\label{eqn:chm}
%\xi_{m+1}\circ m_d=\xi_m.
%\end{equation}

%The functions $\mathbf{H}^s$ satisfies the following identity:
%$$H^{ds} \circ m_d = \lambda (f(\xi_0+s)-f(\xi_0))+\lambda\lambda H^s.$$
%Whenever $m\ge 1$, since $E_{m-1,q} \circ m_d=E_{m,q}$, we have
%\begin{equation}\label{eqn:mrec}
%c_{m-1,q}^{ds}=\lambda c_{m,q}^s.
%\end{equation}

We shall also need the following two lemmas. %\marginpar{$G_0$ is corrected to $\Sigma_d$.}
\begin{lem} \label{lem:bwirr}
For $\mathbf{i}=(i_n)_{n\ge 1}\in \Sigma_d\setminus\{\mathbf{0}\}$, let $z_m=(i_1+i_2d +\cdots +i_m d^{m-1})/d^m$ for each $m\ge 1$. Then for any integer $q\not=0$, there exists a positive integer $m_*$ such that for any $m\ge m_*$,
$$qz_m\not=0\mod 1.$$
\end{lem}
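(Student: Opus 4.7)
The plan is to translate $qz_m \equiv 0 \pmod{1}$ into an integer divisibility statement and then control the $p$-adic valuation of the numerator for a well-chosen prime $p \mid d$. Set $N_m = i_1 + i_2 d + \cdots + i_m d^{m-1}$ so that $z_m = N_m/d^m$; then the condition $qz_m \equiv 0 \pmod{1}$ is equivalent to $d^m \mid qN_m$. The goal becomes: for large enough $m$, $d^m \nmid qN_m$.

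First, since $\mathbf{i}\ne\mathbf{0}$, I would let $n_0\ge 1$ be the least index with $i_{n_0}\ne 0$ and factor
$$N_m = d^{n_0-1} M_m, \qquad M_m = i_{n_0} + d\, i_{n_0+1} + d^2 i_{n_0+2} + \cdots + d^{m-n_0} i_m,$$
valid for $m\ge n_0$. Since $1\le i_{n_0}\le d-1$, the digit $i_{n_0}$ is not divisible by $d$, so there must exist a prime $p\mid d$ with $v_p(i_{n_0}) < v_p(d)$ (otherwise $p^{v_p(d)} \mid i_{n_0}$ for every prime $p\mid d$, forcing $d\mid i_{n_0}$, a contradiction). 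Fix such a prime $p$ and set $a=v_p(d)\ge 1$, $b=v_p(i_{n_0})$, so $0\le b<a$.

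The key step is then an ultrametric computation of $v_p(N_m)$. Writing $M_m = i_{n_0} + d\cdot K_m$ with $K_m$ an integer, the second summand has $v_p\bigl(d\cdot K_m\bigr) \ge a > b = v_p(i_{n_0})$, so the non-Archimedean triangle equality yields $v_p(M_m)=b$ for every $m\ge n_0$. Consequently
$$v_p(qN_m) = v_p(q) + (n_0-1)a + b,$$
which is a \emph{fixed} finite quantity depending only on $q, n_0, a, b$, whereas $v_p(d^m) = ma \to \infty$. Hence for all $m$ exceeding some $m_*$, $v_p(qN_m) < v_p(d^m)$, so $d^m \nmid qN_m$, proving the claim.

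The main obstacle (and the only subtle point) is the choice of $p$ when $d$ is composite: $v_d$ is not a valuation, so one cannot argue directly with $d$. If $d$ were prime the proof would be immediate, as $0 < i_{n_0} < d$ forces $v_d(i_{n_0})=0$; in general one exploits the elementary observation that a digit strictly less than $d$ must fail to be divisible by at least one maximal prime power dividing $d$. This is essentially the same idea underlying the CRT decomposition $\Sigma_d \cong \prod_{p\mid d}\mathbb{Z}_p$ used implicitly elsewhere in this section.
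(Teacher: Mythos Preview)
Your proof is correct. The reduction to $d^m\mid qN_m$, the choice of a prime $p\mid d$ with $v_p(i_{n_0})<v_p(d)$, and the ultrametric computation showing $v_p(qN_m)$ is eventually constant while $v_p(d^m)\to\infty$ are all sound.

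Your route is genuinely different from the paper's. The paper argues dynamically: it observes that $qz_m\equiv 0\pmod 1$ means $z_m$ lies in the finite $\mathbf{m}_d$-invariant set $Y=\{k/q\bmod 1:k\in\mathbb{Z}\}$, and that $\mathbf{m}_d(z_{m+1})=z_m$. If the conclusion failed, all $z_m$ would lie in $Y$; pigeonhole then forces each $z_m$ to be $\mathbf{m}_d$-periodic, but each $z_m$ is also eventually mapped to the fixed point $0$, so $z_m=0$ for all $m$ and $\mathbf{i}=\mathbf{0}$. Your argument instead isolates a single prime $p\mid d$ and tracks the $p$-adic valuation of $qN_m$ directly. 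The paper's proof is more in keeping with the dynamical viewpoint of the surrounding section and handles composite $d$ without singling out a prime, whereas yours is a self-contained arithmetic computation that makes the growth mismatch completely explicit and gives an effective $m_*$ (namely any $m$ with $ma>v_p(q)+(n_0-1)a+b$). One small remark: the paper does not actually use the CRT decomposition $\Sigma_d\cong\prod_{p\mid d}\mathbb{Z}_p$ anywhere, so your closing sentence overstates the connection.
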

\begin{proof} Fix $q$ and let $Y=\{\frac{k}{q}\mod 1: k\in \mathbb{Z}\}$. Then $Y$ is a $\mathbf{m}_d$-invariant finite set. Assuming that the conclusion fails, i.e., $z_m\in Y$ for all $m\ge 1$,  it remains to show that $\mathbf{i}=\mathbf{0}$. Since $\mathbf{m}_d(z_{m+1})=z_{m}$ for all $m\ge 1$, it follows that each $z_m$ is a periodic point of $\mathbf{m}_d$ in $Y$. Since each $z_m$ is eventually mapped to $0\mod 1$, it follows that $z_m=0\mod 1$ for all $m\ge 1$, and hence $\mathbf{i}=\mathbf{0}$.
\end{proof}

%We also need the lemma below.

\begin{lem}\label{lem:const}
  If $\mathcal{H}^1$ is constant, then $h_{\mathbf{0}}$ (or equivalently $H_{\mathbf{0}}$) is $1$-periodic.
\end{lem}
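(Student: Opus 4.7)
The plan is to first pin down the constant value of $\mathcal{H}^{1}$ as zero (via the normalization $h_{\mathbf{j}}(0)=0$), and then to spread the resulting identity from $\Sigma_d\times[0,1)$ to the whole real line, using the shift identity $h_{\mathbf{i}}(x+m)=h_{\mathbf{i}+\iota(m)}(x)$ established in the proof of Lemma~\ref{lem:add} together with the group structure of $\Sigma_d$.

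For the first step I transfer the shift identity to the $H$-picture: since $\Theta$ is a lift with $\Theta(0)=0$, one has $\Theta^{-1}(x+1)=\Theta^{-1}(x)+1$, whence
\[
H_{\mathbf{k}}(x+m)=H_{\mathbf{k}+\iota(m)}(x),\qquad m\in\mathbb{Z}_{\geq 0}.
\]
In particular, for $x\in[0,1)$ one has $\mathcal{H}^{1}(\mathbf{k},x)=H_{\mathbf{k}+\iota(1)}(x)-H_{\mathbf{k}}(x)$. Evaluating at $x=0$ and noting that $H_{\mathbf{j}}(0)=h_{\mathbf{j}}(0)=0$ for every $\mathbf{j}\in\Sigma_d$ (this is built into the definition of $h_{\mathbf{j}}$), one obtains $\mathcal{H}^{1}(\mathbf{k},0)=0$. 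Since $\mathcal{H}^{1}$ is assumed constant, it vanishes identically, so
\[
H_{\mathbf{k}+\iota(1)}(y)=H_{\mathbf{k}}(y)\qquad\text{for all } \mathbf{k}\in\Sigma_d\text{ and }y\in[0,1).
\]

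For the second step I prove $H_{\mathbf{0}}(z+1)=H_{\mathbf{0}}(z)$ for every $z\in\mathbb{R}$, splitting on the sign of $z$. If $z=y+n$ with $y\in[0,1)$ and $n\geq 0$, the shift identity turns the above display with $\mathbf{k}=\iota(n)$ directly into $H_{\mathbf{0}}(z+1)=H_{\mathbf{0}}(z)$. If $z=y-n$ with $y\in[0,1)$ and $n\geq 1$, I instead take $\mathbf{k}=-\iota(n)$, the group inverse of $\iota(n)$ in $\Sigma_d$ whose availability hinges on the group structure emphasized in Section~\ref{subsec:closedsug}. The shift identity applied with $m=n$ to the parameters $\mathbf{k}=-\iota(n)$ and $\mathbf{k}=-\iota(n)+\iota(1)$ yields, after rearrangement, $H_{-\iota(n)}(y')=H_{\mathbf{0}}(y'-n)$ and $H_{-\iota(n)+\iota(1)}(y')=H_{\mathbf{0}}(y'-n+1)$ for all $y'\in\mathbb{R}$. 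Specializing at $y'=y\in[0,1)$ and invoking $H_{-\iota(n)+\iota(1)}(y)=H_{-\iota(n)}(y)$ from the first step, one concludes $H_{\mathbf{0}}(z+1)=H_{\mathbf{0}}(z)$ on $[-n,-n+1)$; letting $n$ vary over $\mathbb{Z}_{\geq 1}$ covers all of $(-\infty,0)$.

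Beyond these two steps there is no substantial obstacle. The only slightly delicate point is the extension to $z<0$, which is essentially just group-theoretic bookkeeping made possible by the fact that $\Sigma_d$ is a group; the heart of the argument is the single observation that the normalization $H_{\mathbf{j}}(0)=0$ forces the constant value of $\mathcal{H}^{1}$ to vanish.
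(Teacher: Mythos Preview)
Your proof is correct, and the extension step you carry out is essentially the same bookkeeping the paper does (the paper also extends $H_{\mathbf{k}}(y+1)-H_{\mathbf{k}}(y)=c$ from $y\in[0,1)$ to all $y\in\mathbb{R}$ via the shift identity, implicitly invoking the group inverse for $y<0$). The genuine difference lies in how you pin down the constant $c$. You observe that the normalization $h_{\mathbf{j}}(0)=0$ (built into the definition) together with $\Theta(0)=0$ and the shift identity gives $\mathcal{H}^{1}(\mathbf{k},0)=H_{\mathbf{k}+\iota(1)}(0)-H_{\mathbf{k}}(0)=0$, forcing $c=0$ immediately. The paper instead first extends to all $y$ to get $\mathcal{H}^{s}\equiv sc$, then feeds this into the symmetry relation $\mathcal{H}^{ds}\circ m_d=\lambda\,\mathcal{H}^{s}$ of Lemma~\ref{lem:symmetry} to obtain $dc=\lambda c$, and concludes $c=0$ from $|\lambda|<\lambda_0^{\alpha}\le d$. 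Your route is more elementary: it does not invoke Lemma~\ref{lem:symmetry} or the quantitative bound on $|\lambda|$, and it would work verbatim for any $\lambda$. The paper's route, by contrast, ties the lemma to the dynamical functional equation already set up for the Fourier-analytic argument, which keeps the proof self-contained within that framework but at the cost of an otherwise unnecessary hypothesis on $|\lambda|$.
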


\begin{proof}
    Let $c\in\mathbb{C}$ be such that  $\mathcal{H}^1\equiv c$. By definition,  $H_{\mathbf{k}}(x+1)-H_{\mathbf{k}}(x)=c$ for any $(\mathbf{k},x)\in\Sigma_d\times[0,1)$. Since $H_{\mathbf{k}+\iota(m)}(y)=H_{\mathbf{k}}(y+m)$ for any $y\in\mathbb{R}$, any integer $m\ge 0$ and any $\mathbf{k}\in\Sigma_d$, it follows that $H_{\mathbf{k}}(y+1)-H_{\mathbf{k}}(y)=c$ for any $y\in\mathbb{R}$ and any $\mathbf{k}\in \Sigma_d$. In particular, $\mathcal{H}^s\equiv sc$ for any $s\in\mathbb{Z}$. By (\ref{eqn:calHd}), it follows that
    %On the other hand, note that $H_{\mathbf{0}}(dx) = \lambda(F(x)-F(0) + H_{\mathbf{0}}(x))$ for any $x\in\mathbb{R}$ and $F$ is $1$-periodic. It %follows that
   \[
   dc= \lambda c.
   \]
By the Mean Value Theorem,
$\lambda_0\le d$, so $|\lambda|<\lambda_0^\alpha\le d$. Thus $c=0$, which completes the proof.
\end{proof}

We are ready to complete the proof of Proposition~\ref{prop:C1(i)(iii)}.
\begin{proof}[Completion of proof of Proposition~\ref{prop:C1(i)(iii)}]
Assume that (i) holds. By Lemma~\ref{lem:const}, it suffices to show that $\mathcal{H}^s$ is constant for any $s\in\mathbb{Z}$. To this end, let
$$E_{m,q}(\mathbf{k},x)=e^{2\pi i \xi_m(\mathbf{k},x) q}, \quad  m, q\in\mathbb{Z}, m\ge 0.$$
We shall show that for any integers $m\ge 0$, $q\not=0$ and $s$,
\begin{equation}\label{eqn:cmq}
c_{m,q}^s:=\int_{\Sigma_d\times[0,1) } \mathcal{H}^s \cdot \overline{E_{m,q}} \dif \mu=0.
\end{equation}

Before the proof of (\ref{eqn:cmq}), let us show how it implies that $\mathcal{H}^s$ is constant. Note that $\mathcal{H}^s$ is bounded and continuous on $\Sigma_d\times[0,1)$. Let $\mathcal{B}$ denote the Borel $\sigma$-algebra of $[0,1)$ and let $\mathcal{B}_m=\xi_m^{-1}(\mathcal{B})$, which is a $\sigma$-algebra in $\Sigma_d\times[0,1)$. Then $\mathcal{B}_m$ is monotone increasing to the Borel $\sigma$-algebra in $\Sigma_d\times[0,1)$.
%\marginpar{$[0,1)\times\Sigma_d$ is corrected to $\Sigma_d\times[0,1)$.}
By the Martingale Convergence theorem, $\mathcal{H}^s_m:=\mathbb{E}[\mathcal{H}^s|\mathcal{B}_m]$ converges $\mu$-a.e. to $\mathcal{H}^s$.  Since $E_{m,q}$ is $\mathcal{B}_m$-measurable,
$$\int_{\Sigma_d\times [0,1)} \mathcal{H}^s_m \cdot \overline{E_{m,q}} \dif \mu= c_{m,q}^s=0$$
for any $m\ge 0$ and $q\not=0$, where the last equality follows from (\ref{eqn:cmq}).
Since $(\xi_m)_* \mu$ is the standard Lebesgue measure on $[0,1)$, each $\mathcal{H}^s_m$, which can be viewed as a function in $[0,1)$, must be constant a.e.. Thus $\mathcal{H}^s$ is constant $\mu$-a.e.. Since $\mathcal{H}^s$ is continuous, it is constant.

%\[
%c_{m,q}^s=\int_{\Sigma_d\times[0,1) } \mathcal{H}^s(\mathbf{k},x) \overline{E_{m,q}(\mathbf{k},x)} \dif \mu.
%\]
It remains to prove that (\ref{eqn:cmq}) holds for all $m\ge 0$ and $q\not=0$. First let us show
\begin{equation}\label{eqn:cmrec}
c_{m+1,q}^{ds}=\lambda c_{m,q}^{s}.
\end{equation}
Indeed, since $(m_d)_*\mu=\mu$ and $\xi_{m+1}\circ m_d=\xi_m$, we have
\begin{multline*}
\int \mathcal{H}^{ds} \circ m_d \cdot\overline{E_{m,q}} \,\dif \mu
= \int \mathcal{H}^{ds} \circ m_d \cdot \overline{E_{m+1,q}\circ m_d}\, \dif \mu
=\int \mathcal{H}^{ds} \cdot \overline{E_{m+1,q}} \,\dif\mu= c_{m+1,q}^{ds}.
\end{multline*}
Combining this with \eqref{eqn:calHd}, we obtain \eqref{eqn:cmrec}.

Next, by Lemma~\ref{lem:add}, $G_0\not=\{\mathbf{0}\}$. Let $\mathbf{i}=(i_n)_{n\ge 1}\in G_0\setminus \{\mathbf{0}\}$. Then by \eqref{eqn:grp2}, for any $s\in \mathbb{Z}$, $m\ge 1$ and $q\in \mathbb{Z}$,
$$c_{m,q}^s=e^{2\pi i z_m q} c_{m,q}^s,$$
where $z_m=(i_1+i_2 d+ \cdots +i_m d^{m-1})/d^m$.
By Lemma~\ref{lem:bwirr}, for each $q\not=0$, there exists $m_*(q)$ such that for any $m\ge m_*(q)$, $qz_m\not=0 \mod 1$, so that $e^{2\pi i z_m q}\not=1$, which implies that $c_{m,q}^s=0$ for all $s\in \mathbb{Z}$. By (\ref{eqn:cmrec}), it follows that $c_{m,q}^s=0$ for all $m\ge 0$ and $0\ne q\in\mathbb{Z}$. The proof is completed.
\end{proof}
%\equiv 0$ and hence $h_{\mathbf{i}}\equiv 0$ for all $\mathbf{i}$.
%$$H^s(\mathbf{k},x)=\sum_{m=1}^\infty \left(f\left(\frac{s+x+k_1+k_2 d+ \cdots+ k_n d^{n-1}}{d^n}\right)-f\left(\frac{x+k_1+k_2 d+ \cdots+ k_n %d^{n-1}}{d^n}\right)\right)=h_{\mathbf{k}}(x+s)-h_{\mathbf{k}}(x).$$

\section{Proof of the Main Theorem}\label{sec:pfmainthm}
This section is devoted to the proof of the Main Theorem. The following is an equivalent reformulation of the Main Theorem.

\begin{mainthm'} Let $T:\mathbb{R}/\mathbb{Z}\to \mathbb{R}/\mathbb{Z}$  be a real analytic expanding map and let $f:\mathbb{R}/\mathbb{Z}\to \mathbb{R}$ be a real analytic function. Assume that $f$ is not real analytically cohomologous to constant. Let $\mu$ be a maximizing measure of $(T,f)$. Then the measure-theoretic entropy $h_T(\mu)=0$.
\end{mainthm'}

In \S\S~\ref{subsec:idea}--~\ref{subsec:compS}, we shall prove the Main Theorem' under the following technical condition:
$$(\ast): T \mbox{ is orientation-perserving and } \text{Fix}(T)\setminus \text{supp}(\mu) \not=\emptyset,$$
where $\text{Fix}(T)$ denotes the set of fixed points of $T$. In \S~\ref{subsec:iteration}, we shall show how to  remove this condition and complete the proof, using the equivalence of the maximization problem between $(T,f)$ and $(T^k, f+f\circ T+\cdots+f\circ T^{k-1})$.

\subsection{Strategy of the proof assuming \texorpdfstring{($\ast$)}{(*)}}\label{subsec:idea}
Let $d$ be the degree of $T$. Without loss of generality, we may assume that $0\in\mathbb{R}/\mathbb{Z}$ is a fixed point of $T$ which is not contained in $\text{supp}(\mu)$. As before, let $\whT$ be the unique lift of $T$ with $\whT(0)=0$ and denote $\tau:=\whT^{-1}$. Moreover, denote $\tau_i(x):=\tau(x+i)$ for $0\le i<d$. Let $S=\text{supp}(\mu)$.
Identifying $\mathbb{R}/\mathbb{Z}$ with $[0,1)$ in the natural way, $S$ is a non-empty compact subset of $(0,1)$  with $T(S)=S$.
%Then $\{\tau_i:0\le i<d\}$ is a contracting IFS with attractor $[0,1]$, and $\tau_i([0,1])=[x_i,x_{i+1}]$ for $0\le i<d$, where $x_i:=\tau(i)$, %$0\le i\le d$.
%\subsection{Preliminaries}
%Let $T$ be orientation preserving with degree $d\ge 2$. Up to a linear change of coordinate, we may assume that $T(0)=0$.

For $\mathbf{i}=(i_n)_{n\ge 1}\in \Sigma_{d}$, recall
\[
\tau_{\mathbf{i},n}=\tau_{i_n}\circ\cdots\circ\tau_{i_2}\circ\tau_{i_1}, \quad \forall n\ge 1.
\]
In order to show that $h_T(\mu)=0$, we shall analyze the inverse limit of $T:S\to S$. So let
$$\mathcal{S}=\{(\mathbf{i}, x)\in \Sigma_d \times S: \tau_{\mathbf{i},n}(x)\in S, \forall n\ge 1\}.$$
Given $x\in S$, denote
\[
\mcls^x:=\{\mathbf{i}\in\Sigma_d: (\mathbf{i},x)\in\mcls\}.
\]
Note that $\mcls^x$ is a non-empty compact subset in  $\Sigma_d$. Given $\mathbf{i}\in \Sigma_d$, denote
\[
\mcls_{\mathbf{i}}:=\{x\in S:(\mathbf{i},x)\in\mcls\}=\{x\in S: \mathbf{i}\in \mcls^x\}.
\]
Then $\mcls_{\mathbf{i}}$ is a compact subset of $S$ (possibly empty).

Let us apply
Theorem~\ref{thm:transversal}, so that
$$h_{\mathbf{i}}(x)=\sum_{n=1}^\infty (\widehat{f}\circ \tau_{\mathbf{i},n}(x)-\widehat{f}\circ \tau_{\mathbf{i},n}(0))), \quad x\in\mathbb{R}.$$
As we are assuming that $f$ is not real analytically cohomologous to constant, the second alternative of Theorem~\ref{thm:transversal} holds. So for distinct $\mathbf{i}, \mathbf{j}\in \Sigma_d$, the real analytic functions $h_{\mathbf{i}}$ and $h_{\mathbf{j}}$ are not identical, and hence $h'_{\mathbf{i}}-h'_{\mathbf{j}}$ has only isolated zeros. This allows us to define, for each $x\in S$, two total orders $\prec^x_+$ and $\prec^x_-$ on $\mcls^x$ as follows.

\begin{defn}
Given $x\in S$, define two total orders $\prec^x_\pm$ on $\mcls^x$ as follows. Given $\mathbf{i}\ne \mathbf{j}$ in $\mcls^x$,
\begin{itemize}
  \item $\mathbf{i}$ is {\em strictly less than} $\mathbf{j}$ with respect to $\prec^x_+$ if there exists $\delta>0$ such that $h'_{\mathbf{i}}(y)<h'_{\mathbf{j}}(y)$ holds for all $y\in (x, x+\delta)$;
  \item $\mathbf{i}$ is {\em strictly less than} $\mathbf{j}$  with respect to $\prec^x_-$ if there exists $\delta>0$ such that $h'_{\mathbf{i}}(y)>h'_{\mathbf{j}}(y)$ holds for all $y\in (x-\delta, x)$.
      %here exists a minimal  $m\ge 0$ such that $(-1)^mh_\imath^{(m)}(x)<(-1)^m h_\jmath^{(m)}(x)$.
\end{itemize}
\end{defn}

\begin{lem}\label{lem:intersection}
 Suppose that $f$ is not real analytically cohomologous to constant. Then for any $x\in S$, $(\mcls^x, \prec^x_+)$ has a unique maximal element $\kappa_+(x)\in \mcls^x$, and $(\mcls^x, \prec^x_-)$ has a unique maximal element $\kappa_-(x)\in \mcls^x$, which define two maps $\kappa_\pm: S\to\Sigma_d$.
\end{lem}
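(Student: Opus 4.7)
The plan is to construct the unique maximum of $(\mcls^x,\prec^x_\pm)$ by a nested extremization over the successive derivatives $h^{(k)}_{\mathbf{i}}(x)$, using Theorem~\ref{thm:transversal} for uniqueness at the end.

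Two preliminaries are required. First, $\mcls^x$ is closed (hence compact) in $\Sigma_d$, as the intersection over $n\ge 1$ of the preimages of the closed set $S$ under the continuous maps $\mathbf{i}\mapsto \tau_{\mathbf{i},n}(x)$. Second, for every $k\ge 1$ the map $\mathbf{i}\mapsto h^{(k)}_{\mathbf{i}}(x)$ is continuous on $\Sigma_d$: standard estimates for real analytic expanding maps (inductive application of the chain rule) yield $|\tau^{(j)}_{\mathbf{i},n}|\le C_j\lambda_1^{-n}$ for each $j\ge 1$, and hence via Fa\`a di Bruno, $|(\widehat{f}\circ \tau_{\mathbf{i},n})^{(k)}(x)|\le C_k\lambda_1^{-n}$ uniformly in $\mathbf{i}$ and in $x$ on compacta; since each summand depends only on $i_1,\dots,i_n$, the series for $h^{(k)}_{\mathbf{i}}(x)$ converges uniformly in $\mathbf{i}$ and its sum is continuous.

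For the order $\prec^x_+$, I set $\mcls^x_0=\mcls^x$ and, for $k\ge 1$,
\[
\mcls^x_k = \Bigl\{\mathbf{i}\in\mcls^x_{k-1}:h^{(k)}_{\mathbf{i}}(x)=\max_{\mathbf{j}\in\mcls^x_{k-1}}h^{(k)}_{\mathbf{j}}(x)\Bigr\}.
\]
Each $\mcls^x_k$ is non-empty and compact, so $\mcls^x_\infty:=\bigcap_k \mcls^x_k \ne \emptyset$. If two distinct $\mathbf{i},\mathbf{j}$ lay in $\mcls^x_\infty$, then $(h_{\mathbf{i}}-h_{\mathbf{j}})^{(k)}(x)=0$ for every $k\ge 1$; by real analyticity $h_{\mathbf{i}}-h_{\mathbf{j}}$ would be constant on $\mathbb{R}$, and since $h_{\mathbf{i}}(0)=h_{\mathbf{j}}(0)=0$ it would vanish identically, contradicting Theorem~\ref{thm:transversal} (whose second alternative applies by hypothesis). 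So $\mcls^x_\infty=\{\kappa_+(x)\}$. For any $\mathbf{j}\in \mcls^x\setminus\{\kappa_+(x)\}$, let $k^*$ be the least $k\ge 1$ with $h^{(k)}_{\mathbf{j}}(x)\ne h^{(k)}_{\kappa_+(x)}(x)$; by construction $h^{(k^*)}_{\mathbf{j}}(x)<h^{(k^*)}_{\kappa_+(x)}(x)$, so the Taylor expansion of $h'_{\mathbf{j}}-h'_{\kappa_+(x)}$ at $x$ has leading term $\frac{h^{(k^*)}_{\mathbf{j}}(x)-h^{(k^*)}_{\kappa_+(x)}(x)}{(k^*-1)!}(y-x)^{k^*-1}$, which is strictly negative for $y>x$ close to $x$, giving $\mathbf{j}\prec^x_+ \kappa_+(x)$.

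For $\prec^x_-$ the same scheme applies with alternating selection: take the argmin at odd $k$ and the argmax at even $k$. The rigidity argument is unchanged and yields $\mcls^x_\infty=\{\kappa_-(x)\}$. At the least order $k^*$ of disagreement between a competitor $\mathbf{j}$ and $\kappa_-(x)$, the sign of $h^{(k^*)}_{\mathbf{j}}(x)-h^{(k^*)}_{\kappa_-(x)}(x)$ is by construction $(-1)^{k^*-1}$, which exactly matches $\operatorname{sign}((y-x)^{k^*-1})$ on $(x-\delta,x)$, producing $h'_{\mathbf{j}}(y)>h'_{\kappa_-(x)}(y)$ there, i.e.\ $\mathbf{j}\prec^x_- \kappa_-(x)$. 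The main technical nuisance is just the chain-rule bookkeeping for continuity of $\mathbf{i}\mapsto h^{(k)}_{\mathbf{i}}(x)$; the conceptual engine of the argument is Theorem~\ref{thm:transversal}, which upgrades \emph{coincidence of all derivatives at a single point $x$} to \emph{identity of the functions $h_{\mathbf{i}}$ on all of $\mathbb{R}$}.
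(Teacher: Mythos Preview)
Your proof is correct and follows essentially the same nested-extremization scheme as the paper (define a decreasing chain of compact sets by successively maximizing derivatives, take the intersection, and read off maximality via Taylor expansion). One minor difference: the paper gets uniqueness for free from the fact that $\prec^x_\pm$ are \emph{total} orders, so it only needs to exhibit \emph{some} element of $\bigcap_k \mcls^x_k$ and check it is maximal; your appeal to Theorem~\ref{thm:transversal} to force $\mcls^x_\infty$ to be a singleton is correct but heavier than necessary.
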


\begin{proof}
  Uniqueness follows from the fact that both orders are total orders. For existence, let us focus on ``$\prec^x_+$"; the discussion on ``$\prec^x_-$" is totally similar and omitted.  For each $m\ge 1$, since $h_{\mathbf{i}}^{(m)}(x)$ is continuous in $\mathbf{i}\in \mathcal{S}^x$, $\mcli_m$ defined inductively below is a decreasing sequence of non-empty compact subsets of $\mathcal{S}^x$, where $\mcli_0=\mathcal{S}^x$:
  \[
  \mcli_m=\left\{\mathbf{i}\in \mcli_{m-1} : h_{\mathbf{i}}^{(m)}(x) = \max\{h_{\mathbf{j}}^{(m)}(x):\mathbf{j}\in \mcli_{m-1}\}\right\}.
  \]
Therefore $\bigcap_{m=1}^\infty \mcli_m$ contains at least one element $\mathbf{i}$. Then for any $\mathbf{j}\in \mathcal{S}^x$, $h_{\mathbf{i}}^{(m)}(x)\ge h_{\mathbf{j}}^{(m)}(x)$ for any $m\ge 1$, which implies that $\mathbf{i}$ is a maximal element in $\mathcal{S}^x$ with respect to $\prec^x_+$.
\end{proof}

%\begin{proof}
%  Existence easily follows from compactness of $\mathcal{S}^x$ and continuity of the functions $\mathbf{i}\mapsto h_{\mathbf{i}}(y)$, $y\in \mathbb{R}$. Uniqueness follows from the fact that the orders are total orders.
%\end{proof}

We shall prove the following proposition in \S~\ref{subsec:compS}.
\begin{prop} \label{prop:limitpt}
Assume that $\mathcal{S}_{\mathbf{i}}$ has a limit point $x$. Then $\mathbf{i}\in \{\kappa_+(x), \kappa_-(x)\}.$
\end{prop}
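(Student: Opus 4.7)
The plan is to use a continuous calibrated sub-action of $(T,f)$ as a bridge between the analytic functions $\{h_{\mathbf{i}}\}$ and the dynamical maximization on $S$, and then extract the conclusion from the second alternative of Theorem~\ref{thm:transversal}. Let $u:\mathbb{R}/\mathbb{Z}\to\mathbb{R}$ be a continuous calibrated sub-action, i.e.\ $f+u-u\circ T\le\beta(f)$ on $\mathbb{R}/\mathbb{Z}$, with equality at every point of the support of a maximizing measure, and in particular along every backward orbit that remains in $S$. Such a $u$ exists by standard results on ergodic optimization for expanding circle maps with Lipschitz potentials.

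The core technical step is the comparison
\[
h_{\mathbf{i}}(x)-h_{\mathbf{i}}(y) \le u(x)-u(y), \qquad \text{valid for } \mathbf{i}\in\Sigma_d,\ y\in S \text{ with } \mathbf{i}\in\mcls^y,\text{ and any } x\in\mathbb{R},
\]
with equality whenever additionally $\mathbf{i}\in\mcls^x$. I would derive this by writing the sub-action (in)equality at each of $\tau_{\mathbf{i},n}(x)$ and $\tau_{\mathbf{i},n}(y)$, subtracting, and summing from $n=1$ to $N$; the right-hand side telescopes to a boundary term equal to the difference of $u$-values at $\tau_{\mathbf{i},N}(x)$ and $\tau_{\mathbf{i},N}(y)$ (reduced mod~$1$), which vanishes as $N\to\infty$ by uniform contraction of the inverse branches together with uniform continuity of $u$.

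With the comparison in hand, pick $x_k\in\mcls_{\mathbf{i}}\setminus\{x\}$ with $x_k\to x$; closedness of $\mcls_{\mathbf{i}}$ yields $\mathbf{i}\in\mcls^x$. After passing to a subsequence, assume $x_k\to x^+$ (the case $x_k\to x^-$ is symmetric and produces $\mathbf{i}=\kappa_-(x)$). For each $\mathbf{j}\in\mcls^x\setminus\{\mathbf{i}\}$, apply the comparison twice: to $\mathbf{i}$ with $\mathbf{i}\in\mcls^{x_k}\cap\mcls^x$ (equality), and to $\mathbf{j}$ with $\mathbf{j}\in\mcls^x$ (inequality $\le$). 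Subtracting gives
\[
(h_{\mathbf{i}}-h_{\mathbf{j}})(x_k)\ge (h_{\mathbf{i}}-h_{\mathbf{j}})(x) \qquad \text{for every } k.
\]

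By Theorem~\ref{thm:transversal}, $h_{\mathbf{i}}\not\equiv h_{\mathbf{j}}$; together with $h_{\mathbf{i}}(0)=h_{\mathbf{j}}(0)=0$ this forces $h'_{\mathbf{i}}\not\equiv h'_{\mathbf{j}}$, so real analyticity places $h'_{\mathbf{i}}-h'_{\mathbf{j}}$ of definite sign on some right half-neighborhood $(x,x+\delta)$. Writing $(h_{\mathbf{i}}-h_{\mathbf{j}})(x+t)-(h_{\mathbf{i}}-h_{\mathbf{j}})(x)=\int_0^t(h'_{\mathbf{i}}-h'_{\mathbf{j}})(x+s)\,ds$ and specializing to $t=x_k-x>0$ forces this sign to be positive, whence $h'_{\mathbf{j}}<h'_{\mathbf{i}}$ on $(x,x+\delta)$, which is precisely $\mathbf{j}\prec^x_+\mathbf{i}$. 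Since $\mathbf{j}\in\mcls^x\setminus\{\mathbf{i}\}$ was arbitrary, $\mathbf{i}$ is the $\prec^x_+$-maximum, i.e.\ $\mathbf{i}=\kappa_+(x)$. I expect the main obstacle to be the comparison lemma — specifically the identification of the telescoping boundary term and the proof that it vanishes — while the closing analyticity step is routine once the one-sided inequality is established.
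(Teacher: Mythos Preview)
Your proposal is correct and follows essentially the same route as the paper: the paper's Lemma~\ref{lem:holonomy} is exactly your comparison inequality via the sub-action (the paper's $g$ is your $u$), and the paper then derives $(h_{\mathbf{i}}-h_{\mathbf{j}})(x_k)\ge (h_{\mathbf{i}}-h_{\mathbf{j}})(x)$ and concludes by the same analyticity argument. The only cosmetic difference is that the paper phrases the last step as ``$\mathbf{i}\prec_+^x\mathbf{j}$ cannot hold'' rather than directly asserting $\mathbf{j}\prec_+^x\mathbf{i}$, and it does not bother with the word ``calibrated'' since only the sub-action inequality and equality on $S$ are used.
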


\subsection{Proof of the Main Theorem' assuming \texorpdfstring{($\ast$)}{(*)}}\label{subsec:proofmainstar}
Let $\mathcal{T}:\mathcal{S}\to \mathcal{S}$ be the inverse limit of $T:S\to S$, i.e., for $x\in \tau_{i_0}([0,1))\cap S$,
$$\mathcal{T}((i_n)_{n\ge 1},x)=((i_{n-1})_{n\ge 1}, \whT(x)-i_0). $$
%\text{\color{red}(does $\mathbf{i}i_0$ need explanation?)}$$
As $0\not\in S$, $T:S \to S$ is naturally topologically conjugate to a one-sided subshift with $d$ symbols
%(\textcolor{red}{``conjugate to" or ``embedded into"? To me the term ``horseshoe" is usually understood as sub-shift of finite type up to %isomorphism. I suggest here replacing the word ``horseshoe" with ``(sub-)shift".}) a one-sided horseshoe
and $\mathcal{T}:\mathcal{S}\to\mathcal{S}$ is topologically conjugate to a two-sided subshift with $d$ symbols.

We shall need the following well-known result.
\begin{prop}\label{prop:uncountable} If $\nu$ is an ergodic invariant probability Borel measure of the two-sided full shift $\sigma: \{0,1,\ldots,d-1\}^\mathbb{Z}\circlearrowleft$ such that $h_\sigma(\nu)>0$, then for any Borel subset $U$ of $\{0,1,\ldots, d-1\}^{\mathbb{Z}}$ with $\nu(U)>0$, the following holds for $\nu$-a.e. $(i_n)_{n\in\mathbb{Z}} \in U$:
$$\{(j_n)_{n\in\mathbb{Z}} \in U: j_n=i_n, \,\forall n\ge 0\},$$
$$\{(j_n)_{n\in\mathbb{Z}} \in U: j_n=i_n,\, \forall n<0\}$$
are both uncountable.
\end{prop}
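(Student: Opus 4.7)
The plan is to reduce the proposition to showing that the fiber measures obtained by disintegrating $\nu$ over the factor map to a one-sided shift are non-atomic, and then invoke the fact that a non-atomic probability measure assigns mass zero to every countable set.

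Put $X = \{0,1,\ldots,d-1\}^{\mathbb{Z}}$ and let $\pi^+: X \to X^+ := \{0,1,\ldots,d-1\}^{\{0,1,2,\ldots\}}$ be the projection onto the non-negative coordinates. Set $\nu^+ = (\pi^+)_*\nu$ and consider the disintegration $\nu = \int_{X^+} \nu_y\, d\nu^+(y)$, where $\nu_y$ is supported on the fiber $(\pi^+)^{-1}(y)$, which coincides with the set $W^+(x) := \{(j_n) \in X: j_n = x_n, \forall\, n \geq 0\}$ whenever $\pi^+(x) = y$. I would prove (a) $\nu_y$ is non-atomic for $\nu^+$-a.e.\ $y$, and (b) for $\nu$-a.e.\ $x \in U$, $\nu_{\pi^+(x)}(U) > 0$. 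Given (a) and (b), $U \cap W^+(x)$ supports a non-atomic measure of positive total mass, hence is uncountable, as desired.

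The main step is (a), and I would extract it from Shannon--McMillan--Breiman together with the positive entropy $h := h_\sigma(\nu) > 0$. For $\nu$-a.e.\ $x = (x_n)_{n\in\mathbb{Z}}$,
$$-\tfrac{1}{n}\log\nu([x_{-n},\ldots,x_{n-1}]) \longrightarrow 2h, \qquad -\tfrac{1}{n}\log\nu([x_0,\ldots,x_{n-1}]) \longrightarrow h.$$
The conditional probability formula for the disintegration then yields
$$\nu_{\pi^+(x)}(\{x\}) \;\leq\; \nu_{\pi^+(x)}([x_{-n},\ldots,x_{-1}]) \;=\; \frac{\nu([x_{-n},\ldots,x_{n-1}])}{\nu([x_0,\ldots,x_{n-1}])} \;\sim\; e^{-nh} \longrightarrow 0,$$
so the measurable function $f(x) := \nu_{\pi^+(x)}(\{x\})$ vanishes $\nu$-a.e. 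Integrating,
$$0 \;=\; \int_X f\, d\nu \;=\; \int_{X^+}\Bigl(\sum_{x \in (\pi^+)^{-1}(y)} \nu_y(\{x\})^2\Bigr) d\nu^+(y),$$
which forces $\nu_y(\{x\}) = 0$ for every $x$ and $\nu^+$-a.e.\ $y$, i.e.\ $\nu_y$ is atomless almost surely. Claim (b) is routine disintegration: since $\nu(U) = \int \nu_y(U)\,d\nu^+(y) > 0$ and the set $\{y : \nu_y(U) = 0\}$ contributes zero to this integral, $\nu$-a.e.\ $x \in U$ has $\nu_{\pi^+(x)}(U) > 0$.

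The uncountability of the second set (fixing negative coordinates) follows by applying the identical argument to the projection onto negative coordinates; equivalently, one uses that $\sigma^{-1}$ has the same ergodic measure and the same entropy as $\sigma$. The delicate point I expect is bridging ``$x$ is not a conditional atom of $\nu_{\pi^+(x)}$ for $\nu$-a.e.\ $x$'' and ``$\nu_y$ is atomless for $\nu^+$-a.e.\ $y$'': the squared-mass integral identity above is what makes this passage rigorous, and one must verify joint measurability of $(x, y) \mapsto \nu_y(\{x\})$ to legitimately exchange integrals.
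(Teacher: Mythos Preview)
Your overall strategy---disintegrate $\nu$ over the one-sided factor, show the fiber measures are non-atomic via entropy, and conclude uncountability---is sound and is in spirit the same as the paper's argument: both rest on non-atomicity of the conditional measures on ``local stable/unstable sets.'' The paper, however, obtains this by conjugating the full shift to a linear horseshoe and quoting Ledrappier--Young dimension theory (positive local dimension of conditionals $\Rightarrow$ no atoms), whereas you work entirely inside symbolic dynamics with Shannon--McMillan--Breiman. Your route is more elementary and self-contained; the paper's route imports a stronger conclusion (positive dimension) that is not needed here.

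That said, there is a genuine gap in your step~(a). The displayed equality
\[
\nu_{\pi^+(x)}([x_{-n},\ldots,x_{-1}]) \;=\; \frac{\nu([x_{-n},\ldots,x_{n-1}])}{\nu([x_0,\ldots,x_{n-1}])}
\]
is not valid in general: the left side conditions on the \emph{entire} future $(x_k)_{k\ge 0}$, while the right side is $\nu\bigl([x_{-n},\ldots,x_{-1}]\,\big|\,[x_0,\ldots,x_{n-1}]\bigr)$, conditioning on only $n$ future coordinates. These agree for Markov measures (by the Markov property) but not otherwise, and there is no a priori inequality between them. What martingale convergence actually gives is
\[
\nu_{\pi^+(x)}([x_{-n},\ldots,x_{-1}]) \;=\; \lim_{m\to\infty}\frac{\nu([x_{-n},\ldots,x_{m-1}])}{\nu([x_0,\ldots,x_{m-1}])},
\]
and your diagonal choice $m=n$ does not recover this limit. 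The standard fix is to bypass the ratio formula and invoke the conditional Shannon--McMillan--Breiman theorem for the measurable partition $\xi^+$ into $\pi^+$-fibers: since $H_\nu(\sigma^n\xi^+\mid\xi^+)=nh$ and the increments of the information function form an ergodic stationary sequence, one has
\[
-\tfrac{1}{n}\log\nu_{\pi^+(x)}([x_{-n},\ldots,x_{-1}]) \longrightarrow h \qquad\text{for }\nu\text{-a.e.\ }x,
\]
whence $\nu_{\pi^+(x)}(\{x\})=0$ a.e. Once this is in place, your squared-mass identity correctly upgrades this to ``$\nu_y$ is non-atomic for $\nu^+$-a.e.\ $y$,'' and the remainder of the argument (including claim~(b)) goes through as written.
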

\begin{proof} It is well-known that the two-sided full shift is topologically conjugate to a linear horseshoe $F:\Lambda\to\Lambda$, with the sets in question corresponding to (local) stable and unstable manifolds. If $\nu_F$ is the $F$-invariant ergodic probability measure corresponding to $\nu$, then $h_F(\nu_F)>0$. It is well-known that (see e.g. \cite{LY85}) for $\nu_F$-a.e. $(y,x)\in \Lambda$, the conditional measures $\nu_F^x$ and $\nu_{F,y}$ along the local stable and unstable manifolds have positive local dimensions and thus admit no atom. So if $\nu_F(W)>0$, then for $\nu_F$-a.e. $z\in W$, the intersection of $W$ with the stable and unstable manifolds of $z$ must be both uncountable.
\end{proof}
The following lemma deals with the measurability issue involved.
\begin{lem}\label{lem:Borel}
  The set $B$ defined below is Borel:
  \[
  B:=\{(\mathbf{i},x)\in \mcls : x~\text{is a limit point of } \mcls_\mathbf{i}\}.
  \]
\end{lem}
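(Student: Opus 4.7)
The plan is to express $B$ as an $F_{\sigma\delta}$ subset of $\mathcal{S}$, relying only on the fact that $\mathcal{S}$ is closed (hence compact) inside $\Sigma_d\times S$. Since $\mathcal{S}_{\mathbf{i}}$ is closed in the line, a point $x\in \mathcal{S}_{\mathbf{i}}$ is a limit point of $\mathcal{S}_{\mathbf{i}}$ if and only if for every integer $n\ge 1$ there exists $y\in \mathcal{S}_{\mathbf{i}}$ with $0<|y-x|<1/n$. Accordingly, I would set
\[
F_n:=\bigl\{(\mathbf{i},x)\in\mathcal{S} : \exists\, y\in S \text{ with } (\mathbf{i},y)\in\mathcal{S} \text{ and } 0<|y-x|<1/n\bigr\},
\]
so that $B=\bigcap_{n\ge 1} F_n$, and it suffices to show each $F_n$ is Borel.

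To handle the strict inequalities, for every pair of integers with $k\ge 2n+1$ I would introduce
\[
\widetilde G_{n,k}:=\Bigl\{(\mathbf{i},x,y)\in \mathcal{S}\times S : (\mathbf{i},y)\in\mathcal{S},\; \tfrac{1}{k}\le |y-x|\le \tfrac{1}{n}-\tfrac{1}{k}\Bigr\}.
\]
Because $\mathcal{S}$ is closed in $\Sigma_d\times S$ and the distance constraints are closed, $\widetilde G_{n,k}$ is closed inside the compact space $\mathcal{S}\times S$. Its image $G_{n,k}$ under the continuous projection $(\mathbf{i},x,y)\mapsto (\mathbf{i},x)$ is therefore compact. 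A direct check gives $F_n=\bigcup_{k\ge 2n+1} G_{n,k}$: the inclusion $\supset$ is immediate, while if $(\mathbf{i},x)\in F_n$ is witnessed by some $y$, then any $k\ge \max\bigl(\,1/|y-x|,\,1/(1/n-|y-x|)\,\bigr)$ places $(\mathbf{i},x)$ in $G_{n,k}$. Hence each $F_n$ is $F_\sigma$ and $B$ is Borel.

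I do not anticipate a genuine obstacle here: the argument is routine descriptive set theory, and the only structural input is that $\mathcal{S}$ is compact, which follows from its defining description as a countable intersection of the closed conditions $\tau_{\mathbf{i},n}(x)\in S$. The mild technical point to be careful about is that the set $\{0<|y-x|<1/n\}$ is not closed, which is why I introduce the two-sided closed cutoff $1/k\le |y-x|\le 1/n-1/k$ so that projections of closed subsets of a compact space stay closed.
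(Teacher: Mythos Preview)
Your proof is correct and genuinely different from the paper's. The paper instead introduces the function
\[
\phi(\mathbf{i},x)=\dist\bigl(x,(\mathcal{S}_{\mathbf{i}}\cup\{2\})\setminus\{x\}\bigr),
\]
so that $B=\phi^{-1}(0)$, and argues that $\phi$ is Borel because it is continuous in $x$ for each fixed $\mathbf{i}$ and, for each fixed $x$, is a pointwise limit of lower semi-continuous functions of $\mathbf{i}$. Your route sidesteps the appeal to ``separately continuous/Borel implies jointly Borel'' and uses only that $\mathcal{S}$ is compact, so that projections of closed subsets of $\mathcal{S}\times S$ are closed; this yields an explicit $F_{\sigma\delta}$ description of $B$. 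The paper's approach, by contrast, packages the information into a single Borel function, which is tidy but leans on a slightly less elementary measurability lemma. Either argument is perfectly adequate here.
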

%\textcolor{red}{The proof below has been rewritten.}
\begin{proof}
 Define $\phi:\Sigma_d\times S\to \mathbb{R}$ as $\phi(\mathbf{i},x)=\dist(x,(\mcls_{\mathbf{i}}\cup\{2\})\setminus\{x\})$ (here use $\mcls_{\mathbf{i}}\cup\{2\}$ instead of $\mcls_{\mathbf{i}}$ because $\mcls_{\mathbf{i}}\setminus\{x\}$ might be empty), where $\dist(x,\cdot)$ means the distance of $x$ to a subset of the real line in the usual sense. Then $B=\phi^{-1}(0)$. It suffices to show that $\phi$ is Borel. In fact, the following hold.
  \begin{itemize}
    \item Given $\mathbf{i}\in \Sigma_d$, $x\mapsto \phi(\mathbf{i},x)$ is continuous. This is easy to check.
    \item Given $x\in S$, $\mathbf{i}\mapsto \phi(\mathbf{i},x)$ is Borel. To see this, for each $n\ge 1$, let
    \[
    \varphi_n(\mathbf{i}):=\dist\left(x,(\mcls_{\mathbf{i}}\cup\{2\})\setminus (x-\tfrac{1}{n},x+\tfrac{1}{n})\right), \quad \mathbf{i}\in \Sigma_d.
    \]
    Then, by compactness of $\mathcal{S}$, $\varphi_n$ is lower semi-continuous for each $n$, while $\lim_{n\to\infty}\varphi_n(\mathbf{i})=\phi(\mathbf{i},x)$.
  \end{itemize}
  It follows that $\phi$ is Borel.
\end{proof}

Let us now complete the proof of the Main Theorem assuming ($\ast$).

%(\textcolor{red}{The second paragraph of proof below has been rewritten.})
\begin{proof}[Proof of the Main Theorem' assuming ($\ast$)] Assume by contradiction that $h_T(\mu)>0$. By ergodic decomposition and affinity of the entropy function, we may assume that $\mu$ is ergodic with respect to $T$.  As the inverse limit of $T:S\to S$, the map $\mathcal{T}:\mathcal{S}\to \mathcal{S}$ has an ergodic invariant measure $\widetilde{\mu}$ with positive entropy.

We shall show that $\widetilde{\mu}(\mathcal{S})=0$ to get a contradiction. To this end, let $B$ be the set defined in Lemma~\ref{lem:Borel}. By the definition of $B$ and Proposition~\ref{prop:limitpt}, $\mathbf{i}\in \{\kappa_{\pm}(x)\}$ for any $(\mathbf{i},x)\in B$, so by Proposition~\ref{prop:uncountable}, $\widetilde{\mu}(B)=0$. By the definition of $B$ again, for any $(\mathbf{i},x)\in \mcls\setminus B$, the set $\{y\in S: (\mathbf{i},y)\in \mcls\setminus B\}$ is countable,
%\marginpar{``$\mcls_{\mathbf{i}}$ is countable for any $(\mathbf{i},x)\in \mcls\setminus B$" is corrected to the red-colored statement.}
so by Proposition~\ref{prop:uncountable} again, $\widetilde{\mu}(\mcls\setminus B)=0$. The proof is done.
\end{proof}

\subsection{Complexity of \texorpdfstring{$\mathcal{S}$}{the inverse limit}}\label{subsec:compS}
This subsection is devoted to the proof of Proposition~\ref{prop:limitpt}.
Let $g:\mathbb{R}/\mathbb{Z}\to\mathbb{R}$ be a {\em sub-action} for $(T,f)$, i.e., $g:\mathbb{R}/\mathbb{Z}\to\mathbb{R}$ is a continuous function such that
$$f(x)\le g\circ T(x)- g(x)+\beta(f)$$
for all $x\in \mathbb{R}/\mathbb{Z}$. If we put
\begin{equation}\label{eq:level set}
  S_0=\{x\in\mathbb{R}/\mathbb{Z}: f(x)= g(T(x))-g(x)+\beta(f)\},
\end{equation}
then a $T$-invariant measure is a maximizing measure $f$ if and only if it is supported in $S_0$.
In particular, $S\subset S_0$.

Sub-actions played an important role in the ergodic maximization problem. When $T$ is expanding and $f$ is Lipschitz, it is well-known that there exists a sub-action $g$ which is Lipschitz.  This is often referred to as {\em M\~an\'e's lemma.} However, in general we cannot expect higher regularity of $g$, even when we assume $f$ and $T$ are both real analytic, see \cite{BJ02}.
%In \cite{Gao21}, it has been shown in the case $T(x)=dx \mod 1$ with $d\ge 2$ and $f$ is a sum of convex function and a quadratic function %\textcolor{red}{($f$ is considered as a function on $\mathbb{R}$??? Such a function is called {\em semi-convex} in some literatures. \cite{Gao21} has %been cited before, so is it necessary to mention it again here?)}, one can find $g$ in the same form.
\begin{lem}\label{lem:holonomy}
  Let $(\mathbf{i},x)\in\mathcal{S}$. Then for any $y\in S$,
  %,y\in [0,1]$. Let $\imath\in \mcls^x$ and $\jmath\in \mcls^y$. Then
  \[
  g(y)-g(x)\ge h_{\mathbf{i}}(y)-h_{\mathbf{i}}(x).
  %\sum_{n=1}^\infty \big(f(\tau_{\mathbf{i},n}y)-f(\tau_{\mathbf{i},n}x)\big).% \sum_{n=1}^\infty %\big(f(\tau_{\imath,n}y)-f(\tau_{\imath,n}x)\big) \le g(y)-g(x)\le
  \]
  Moreover, equality holds if  $(\mathbf{i},y)\in \mathcal{S}$.
\end{lem}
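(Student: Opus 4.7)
The statement is a standard ``telescoping'' cohomological identity built out of the sub-action, and the argument is driven entirely by the fact that the forward orbits $\{\pi(\tau_{\mathbf{i},n}(x))\}$ lie in $S\subset S_0$. My plan is first to lift the sub-action inequality from $\mathbb{R}/\mathbb{Z}$ to $\mathbb{R}$. Setting $\widehat{g}=g\circ\pi$ and using $\pi\circ\whT=T\circ\pi$, the inequality $f\le g\circ T-g+\beta(f)$ becomes
\[
\widehat{f}(z)\le \widehat{g}(\whT(z))-\widehat{g}(z)+\beta(f),\qquad z\in\mathbb{R},
\]
with equality exactly when $\pi(z)\in S_0$. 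The key algebraic identity I will use is that, for every $n\ge 1$ and every $w\in\mathbb{R}$,
\[
\whT(\tau_{\mathbf{i},n}(w))=\tau_{\mathbf{i},n-1}(w)+i_n,
\]
so by $1$-periodicity of $\widehat{g}$ we have $\widehat{g}(\whT(\tau_{\mathbf{i},n}(w)))=\widehat{g}(\tau_{\mathbf{i},n-1}(w))$.

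Next I apply the lifted sub-action inequality with $z=\tau_{\mathbf{i},n}(y)$ and $z=\tau_{\mathbf{i},n}(x)$. For the first choice $y\in S$ we only get an inequality; for the second, since $(\mathbf{i},x)\in\mathcal{S}$ means $\pi(\tau_{\mathbf{i},n}(x))\in S\subset S_0$ for all $n\ge 1$, we obtain equality. Subtracting and using the above identity,
\[
\widehat{f}(\tau_{\mathbf{i},n}(y))-\widehat{f}(\tau_{\mathbf{i},n}(x)) \le \bigl[\widehat{g}(\tau_{\mathbf{i},n-1}(y))-\widehat{g}(\tau_{\mathbf{i},n-1}(x))\bigr] - \bigl[\widehat{g}(\tau_{\mathbf{i},n}(y))-\widehat{g}(\tau_{\mathbf{i},n}(x))\bigr].
\]
Summing this telescoping inequality from $n=1$ to $N$ (the constants $\widehat{f}\circ\tau_{\mathbf{i},n}(0)$ cancel between the two series defining $h_{\mathbf{i}}(y)$ and $h_{\mathbf{i}}(x)$) yields
\[
\sum_{n=1}^N \bigl(\widehat{f}(\tau_{\mathbf{i},n}(y))-\widehat{f}(\tau_{\mathbf{i},n}(x))\bigr)\le \bigl(\widehat{g}(y)-\widehat{g}(x)\bigr)-\bigl(\widehat{g}(\tau_{\mathbf{i},N}(y))-\widehat{g}(\tau_{\mathbf{i},N}(x))\bigr).
\]

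Finally I pass to the limit $N\to\infty$. The partial sum on the left converges to $h_{\mathbf{i}}(y)-h_{\mathbf{i}}(x)$. For the remainder, since $T$ is expanding the composition $\tau_{\mathbf{i},N}$ contracts $\mathbb{R}$ at a uniform exponential rate while mapping into the bounded set $[0,1]$, so $|\tau_{\mathbf{i},N}(y)-\tau_{\mathbf{i},N}(x)|\to 0$; continuity of $g$ (hence uniform continuity of $\widehat{g}$ on $[0,1]$) then forces the bracketed difference to tend to $0$. This yields the inequality $h_{\mathbf{i}}(y)-h_{\mathbf{i}}(x)\le g(y)-g(x)$. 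If moreover $(\mathbf{i},y)\in\mathcal{S}$, then $\pi(\tau_{\mathbf{i},n}(y))\in S\subset S_0$ for every $n$, so the sub-action inequality is an equality at every $z=\tau_{\mathbf{i},n}(y)$ as well, and the telescoping argument becomes an equality throughout. I do not anticipate a genuine obstacle here; the only care required is bookkeeping between the lift $\mathbb{R}\to\mathbb{R}/\mathbb{Z}$ and justifying that the boundary term vanishes in the limit, both of which are standard.
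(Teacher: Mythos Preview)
Your proposal is correct and follows essentially the same telescoping argument as the paper: apply the sub-action inequality along the backward orbit $\tau_{\mathbf{i},n}$, using equality at $x$ (since $(\mathbf{i},x)\in\mathcal{S}$ forces each $\tau_{\mathbf{i},n}(x)\in S\subset S_0$) and inequality at $y$, then sum and let the boundary term $\widehat{g}(\tau_{\mathbf{i},N}(y))-\widehat{g}(\tau_{\mathbf{i},N}(x))$ vanish by uniform contraction and continuity of $g$. The paper's version is terser and works directly on $\mathbb{R}/\mathbb{Z}$ rather than explicitly lifting, but the substance is identical.
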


%\begin{proof}
\begin{proof}
Denote $\tau_{\mathbf{i}, 0}=\mathrm{id}$. Since $g$ is a sub-action and $x\in\mathcal{S}_{\mathbf{i}}$, for each $n\ge 0$,
$$g(\tau_{\mathbf{i}, n}(x))= f(\tau_{\mathbf{i}, n+1}(x))+ g(\tau_{\mathbf{i}, n+1}(x))-\beta(f),$$
$$g(\tau_{\mathbf{i}, n}(y))\ge f(\tau_{\mathbf{i}, n+1}(y))+ g(\tau_{\mathbf{i}, n+1}(y))-\beta(f).$$
Moreover, equality holds in the last inequality if $y\in\mathcal{S}_{\mathbf{i}}$.
Therefore, for each $m\ge 1$,
$$g(x)=\sum_{n=1}^m f(\tau_{\mathbf{i}, n}(x))+ g(\tau_{\mathbf{i}, m}(x))-m\beta(f),$$
$$g(y)\ge \sum_{n=1}^m f(\tau_{\mathbf{i}, n}(y))+ g(\tau_{\mathbf{i}, m}(y))-m\beta(f).$$
Consequently,
\[
g(y)-g(x)\ge \sum_{n=1}^m \big(f(\tau_{\mathbf{i},n}(y))-f(\tau_{\mathbf{i},n}(x))\big)  + \big(g(\tau_{\mathbf{i},m}(y))-g(\tau_{\mathbf{i},m}(x))\big).
\]
Letting $m\to\infty$, we obtain the desired inequality.

If $y\in \mathcal{S}_{\mathbf{i}}$, then all the inequalities above become equalities.
\end{proof}

\begin{proof}[Proof of Proposition~\ref{prop:limitpt}]
  We may assume that there exists a sequence $(x_k)$  in $\mathcal{S}_{\mathbf{i}}$  converging to $x$ from the right side; the other situation is similar and omitted. We shall show that $\mathbf{i}=\kappa_+(x)$. To this end, consider an arbitrary $\mathbf{j}\in\mathcal{S}^x$.
  %$\mathbf{j}\in \mathcal{S}^x$.
  By Lemma~\ref{lem:holonomy}, we have the following.
\begin{itemize}
  \item Since $x,x_k\in \mathcal{S}_{\mathbf{i}}$,
  $$g(x_k)-g(x)=h_{\mathbf{i}} (x_k)-h_{\mathbf{i}}(x)=\int_x^{x_k} h'_{\mathbf{i}}(t) \dif t.$$
  \item Since $x\in \mathcal{S}_{\mathbf{j}}$,
  $$g(x_k)-g(x) \ge h_{\mathbf{j}} (x_k)-h_{\mathbf{j}} (x)=\int_x^{x_k} h'_{\mathbf{j}}(t) \dif t.$$
\end{itemize}
Therefore,
  $$\int_x^{x_k} h'_{\mathbf{j}}(t) \dif t \le \int_x^{x_k} h'_{\mathbf{i}}(t) \dif t.$$
 Since $x_k$ converges to $x$ from the right side, the inequality above implies that $h'_{\mathbf{j}}(t)\le h'_{\mathbf{i}}(t)$ holds for a sequence of points converging to $x$ from the right side.
 It follows that $\mathbf{i} \prec_+^x \mathbf{j}$ cannot hold. Therefore $\mathbf{i}=\kappa_+(x)$.
\end{proof}

\subsection{Iteration}\label{subsec:iteration}
We shall show that the technical condition ($\ast$) can be removed.
Let $T\in\mcle^\omega$ and let $f\in C^\omega(\mathbb{R}/\mathbb{Z})$. For any positive integer $k$, let $\mathscr{S}_k f= f+f\circ T+\cdots+ f\circ T^{k-1}$. %The maximizing problems of $(T,f)$ and $(\mathscr{S}_k f, T^k)$ are equivalent.

The following elementary observations should be well-known. Let us include a proof for completeness.

\begin{lem}\label{lem:iterates}
\mbox{}
\begin{enumerate}
\item There exists $g\in C^\omega(\mathbb{R}/\mathbb{Z})$ such that $\mathscr{S}_k f= g\circ T^k-g +const$ for some $k\ge 1$ if and only if the same holds for $k=1$.
\item A maximizing measure of $(T,f)$ is also a maximizing measure of $(T^k, \mathscr{S}_k f)$ for any $k\ge 1$.
\end{enumerate}
\end{lem}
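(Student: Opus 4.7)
For part (2), my plan is to compare $\beta(T,f)$ with $\beta(T^k,\mathscr{S}_k f)$ directly. Every $T$-invariant measure is $T^k$-invariant, and for $\mu\in\mathcal{M}(T)$ one has $\int\mathscr{S}_k f\,\dif\mu = k\int f\,\dif\mu$ by $T$-invariance. Conversely, given $\nu\in\mathcal{M}(T^k)$, its symmetrization $\tilde\nu:=\tfrac{1}{k}\sum_{i=0}^{k-1}T^i_*\nu$ lies in $\mathcal{M}(T)$ and satisfies $\int\mathscr{S}_k f\,\dif\nu = k\int f\,\dif\tilde\nu$, since $\int f\circ T^i\,\dif\nu=\int f\,\dif(T^i_*\nu)$. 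Taking suprema gives $\beta(T^k,\mathscr{S}_k f)=k\beta(T,f)$, and the conclusion of (2) is immediate: if $\int f\,\dif\mu=\beta(T,f)$, then $\int\mathscr{S}_k f\,\dif\mu=k\beta(T,f)=\beta(T^k,\mathscr{S}_k f)$.

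For part (1), the direction ``$k=1$ implies all $k$'' is a one-line telescoping: if $f=g\circ T-g+c$, then $\mathscr{S}_k f=g\circ T^k-g+kc$. The substance is the reverse direction. My plan is to let $u:=g\circ T-g\in C^\omega$ and exploit the cohomological identity by precomposing with $T$. Writing the assumed identity as $\mathscr{S}_k f = g\circ T^k-g+c$ and its $T$-shift as $\mathscr{S}_k f\circ T = g\circ T^{k+1}-g\circ T+c$, I subtract them; the overlapping terms $f\circ T + \cdots + f\circ T^{k-1}$ cancel and I am left with
\[
f\circ T^k-f \;=\; (g\circ T^{k+1}-g\circ T)-(g\circ T^k-g) \;=\; u\circ T^k-u,
\]
which rearranges to $(f-u)\circ T^k = f-u$. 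Thus the real analytic function $f-u$ is $T^k$-invariant. Since $T$ is expanding of degree $d\ge 2$, so is $T^k$ (of degree $d^k$), hence it is topologically transitive (in fact topologically conjugate to $x\mapsto d^k x\bmod 1$). Any continuous $T^k$-invariant function on $\mathbb{R}/\mathbb{Z}$ is therefore constant, so $f-u\equiv c'$ for some $c'\in\mathbb{R}$, giving $f=g\circ T-g+c'$ with the same $g\in C^\omega$.

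I do not anticipate any substantive obstacle. The only place requiring thought is identifying the right manipulation in part (1), namely that precomposing with $T$ and subtracting automatically produces the $T^k$-invariant quantity $f-u$; after that, the real analyticity of $\tilde g=g$ comes for free and transitivity of $T^k$ closes the argument.
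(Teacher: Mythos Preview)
Your proposal is correct and follows essentially the same route as the paper. In part (1) the paper also subtracts $\mathscr{S}_k f$ from $(\mathscr{S}_k f)\circ T$ to obtain $(f+g-g\circ T)\circ T^k=f+g-g\circ T$ (your $f-u$) and then invokes topological transitivity of $T^k$; in part (2) the paper likewise uses the symmetrization $\tfrac{1}{k}\sum_{j=0}^{k-1}T^j_*\nu$ to show $\beta(T,f)=\tfrac{1}{k}\beta(T^k,\mathscr{S}_k f)$.
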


\begin{proof}
(1) If $f=g\circ T- g+ c$, then $\mathscr{S}_k f= g \circ T^k- g+ kc.$ For the other direction,  suppose that $\mscs_k f=g\circ T^k-g+c$ for some $g\in C^\omega(\mathbb{R}/\mathbb{Z})$. Then
  \[
  f\circ T^k -f =(\mscs_kf)\circ T - \mscs_kf =(g\circ T^k-g)\circ T - (g\circ T^k-g),
  \]
  which can be rewritten as follows:
  \[
  (f+g-g\circ T)\circ T^k =f+g-g\circ T.
  \]
  Since $T^k:\mbbt\to\mbbt$ is topologically transitive and since $f+g-g\circ T$ is continuous, the equality above implies that $f+g-g\circ T=const$.

(2) If $\nu$ is a maximizing measure for $(T^k, \mathscr{S}_k f)$, then $\mu= \frac{1}{k} \sum_{j=0}^{k-1} T^j_* \nu$ is $T$-invariant, and
$$\int f \dif\mu= \frac{1}{k} \sum_{j=0}^{k-1} \int f\circ T^j \dif\nu=\int \frac{1}{k}\mathscr{S}_k f \dif\nu.$$
This shows $\beta(T,f)\ge \beta(T^k, \frac{1}{k}\mathscr{S}_k f)$. On the other hand, if $\mu$ is a maximizing measure of $(T,f)$, then $\mu$ is $T^k$-invariant and
 $$\beta(T,f)=\int f \dif\mu= \int \frac{1}{k} \mathscr{S}_k f \dif\mu\le \beta(T^k, \frac{1}{k}\mathscr{S}_k f).$$
Hence $\beta(T,f)= \beta(T^k, \frac{1}{k}\mathscr{S}_k f)$ and a maximizing measure for $(T,f)$ is also maximizing for $(T^k, \mathscr{S}_k f)$.
%%%the paragraph above is a revision of the paragraph below.

%For each $T$-invariant Borel probability measure $\mu$, $\mu$ is $T^k$-invariant and
% $$\int f \dif\mu= \int \frac{1}{k} \mathscr{S}_k f \dif\mu.$$
%So $\beta (T,f)\le \beta(\frac{1}{k}\mathscr{S}_k f, T^k)$. On the other hand, if $\nu$ is a maximizing measure for $(T^k, \mathscr{S}_k f)$, then $\mu= \frac{1}{k} \sum_{j=0}^{k-1} T^j_* \nu$ is $T$-invariant, and
%$$\int f \dif\mu= \frac{1}{k} \sum_{j=0}^{k-1} \int f\circ T^j \dif\nu=\int \frac{1}{k}\mathscr{S}_k f \dif\nu.$$
%This shows $\beta(T,f)\ge \beta(\frac{1}{k}\mathscr{S}_k f, T^k)$. Hence $\beta(T,f)= \beta(\frac{1}{k}\mathscr{S}_k f, T^k)$ and a $(T,f)$-maximizing measure is also maximizing for $(T^k, \mathscr{S}_k f)$.
\end{proof}

\begin{proof}[Completion of the proof of the Main Theorem']  As $f$ is not analytically cohomologous to a constant, for $S_0$ defined by \eqref{eq:level set}, $S_0\subsetneq \mathbb{R}/\mathbb{Z}$, so $S=\text{supp}(\mu)$ is nowhere dense in $\mathbb{R}/\mathbb{Z}.$ In particular, there is a periodic point $p$ of $T$ such that $p\not\in S$. Let $k$ be an even positive integer such that $T^k(p)=p$. By Lemma~\ref{lem:iterates}, $\mu$ is a maximizing measure for $(T^k, \mathscr{S}_k f)$ and $\mathscr{S}_k f$ is not analytically cohomologous to a constant with respect to $T^k$. Thus by what we have proved before, $h_{T^k}(\mu)=0$ and hence $h_T(\mu)=0$.
\end{proof}

\section{Proof of Theorems~\ref{thm:Cr}, \ref{thm:analytic Lyp} and ~\ref{thm:Cr Lyp}}
\label{sec:coro}
%(\textcolor{red}{This section is almost unchanged. I think maybe we should not provide a detailed proof of the proposition below because similar %things had almost been done in \cite{Bre08,JM08}.})
In this section, we prove Theorems~\ref{thm:Cr},~\ref{thm:analytic Lyp} and ~\ref{thm:Cr Lyp}.
The basic idea is to approximate $C^r$ functions or maps with real analytic ones and then apply the Main Theorem.

We shall need upper semi-continuity of the following function:
\begin{equation}\label{eq:entropy func}
\mclh(T,f):=\sup_{\mu\in \mclm_{max}(T,f)} h_T(\mu), \,\, (T, f)\in \mcle^1\times C^0(\mathbb{R}/\mathbb{Z}).
\end{equation}
Here $\mclm_{max}(T,f)$ denote the collection of $(T,f)$ maximizing measures.  This result is essentially contained in \cite{Bre08, JM08}.

%We shall need the following proposition which is more or less well-known. For fixed $T$, the upper semi-continuity of $f\mapsto \mclh(T,f)$ was %shown in \cite{Bre08}; see the first paragraph in the section of ``Proof of Theorem~1.2". For upper semi-continuity of $(T,f)\mapsto\mclh(T,f)$ or %$T\mapsto\mclh(T,\log|T'|)$ , it was implicitly shown in \cite{JM08}.

\begin{prop}\label{prop:usc}
The function $\mclh$ is upper semi-continuous on $\mcle^1\times C^0(\mathbb{R}/\mathbb{Z})$.
%
%The same conclusion holds for $\mclm_{max}$ instead of $\mclm_{min}$.
\end{prop}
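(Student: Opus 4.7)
The plan is to verify upper semi-continuity along sequences. Take $(T_n,f_n)\to (T_*,f_*)$ in $\mcle^1\times C^0(\mathbb{R}/\mathbb{Z})$; the goal is to show $\limsup_{n\to\infty} \mclh(T_n,f_n)\le \mclh(T_*,f_*)$. For each $n$, the set $\mclm_{max}(T_n,f_n)$ is a non-empty compact subset of $\mclm(T_n)$ (it is the level set of the weakly continuous map $\nu\mapsto \int f_n\,\dif\nu$ at its maximum), and because $T_n$ is expansive, $h_{T_n}(\cdot)$ is upper semi-continuous on $\mclm(T_n)$. Therefore the supremum defining $\mclh(T_n,f_n)$ is attained, and I can pick $\mu_n\in\mclm_{max}(T_n,f_n)$ with $h_{T_n}(\mu_n)=\mclh(T_n,f_n)$. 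After extracting a subsequence, $\mu_n\to\mu$ weakly; $C^1$-convergence $T_n\to T_*$ implies $T_*$-invariance of $\mu$, and uniform convergence $f_n\to f_*$ gives $\int f_*\,\dif\mu=\lim\int f_n\,\dif\mu_n=\lim \beta(T_n,f_n)$.

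Next I would show that $\mu\in\mclm_{max}(T_*,f_*)$. Since $T_n$ and $T_*$ are $C^1$ expanding of the same degree for large $n$, they are topologically conjugate, and I would build a conjugating homeomorphism $H_n$ with $T_n\circ H_n=H_n\circ T_*$ and $H_n\to\id$ uniformly. The construction goes through inverse-branch codings: as $T_n\to T_*$ in $C^1$, the inverse branches of $T_n$ converge uniformly to those of $T_*$, so the associated coding maps $\pi_n,\pi:\Sigma_d\to\mathbb{R}/\mathbb{Z}$ converge uniformly, and $H_n=\pi_n\circ\pi^{-1}$ (modulo identifications on $\Sigma_d$) has the required property. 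Pushing any $(T_*,f_*)$-maximizing measure $\nu$ forward by $H_n$ yields $\nu_n\in\mclm(T_n)$ with $\nu_n\to\nu$ weakly, so $\liminf\beta(T_n,f_n)\ge\int f_*\,\dif\nu=\beta(T_*,f_*)$. Combined with the identity at the end of the previous paragraph and with $T_*$-invariance of $\mu$, this yields $\int f_*\,\dif\mu=\beta(T_*,f_*)$, so $\mu$ is maximizing.

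The final ingredient is $\limsup_{n\to\infty} h_{T_n}(\mu_n)\le h_{T_*}(\mu)$; together with $h_{T_*}(\mu)\le \mclh(T_*,f_*)$ this finishes the proof. Transferring along the conjugacies, the measures $\tilde\mu_n:=(H_n^{-1})_*\mu_n\in\mclm(T_*)$ satisfy $h_{T_*}(\tilde\mu_n)=h_{T_n}(\mu_n)$ by conjugacy invariance of entropy, and $H_n\to\id$ uniformly combined with $\mu_n\to\mu$ weakly gives $\tilde\mu_n\to\mu$ weakly. Upper semi-continuity of $h_{T_*}$ on $\mclm(T_*)$ (from expansiveness of $T_*$) then delivers $\limsup h_{T_*}(\tilde\mu_n)\le h_{T_*}(\mu)$, as desired.

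The main obstacle I anticipate is producing the conjugacies $H_n$ with $H_n\to\id$ uniformly; this is the standard continuous dependence of topological conjugacy of expanding maps on the dynamics, but writing it out rigorously requires careful estimates exploiting that the expansion rates of $T_n$ converge to that of $T_*$, so the inverse-branch contractions are uniform for $n$ large. As a self-contained alternative, one can avoid the conjugacy by choosing a finite partition $\mclp$ of $\mathbb{R}/\mathbb{Z}$ with diameter below a common expansiveness constant of the $T_n$ (valid for large $n$) and with $\mu$-null boundary, and then bounding $\limsup_n \tfrac{1}{k}H_{\mu_n}\bigl(\bigvee_{i=0}^{k-1}T_n^{-i}\mclp\bigr)\le \tfrac{1}{k}H_\mu\bigl(\bigvee_{i=0}^{k-1}T_*^{-i}\mclp\bigr)$ using weak convergence and the convergence of inverse branches to control boundary null sets, then letting $k\to\infty$.
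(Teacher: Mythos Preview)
Your proof is correct and rests on the same key ingredient as the paper's: the existence of conjugating homeomorphisms $H_n$ with $T_n\circ H_n=H_n\circ T_*$ and $H_n\to\id$ uniformly (which the paper simply cites from \cite[Lemma~2]{JM08}). The difference is purely organizational. You work measure-by-measure: pick $\mu_n$ achieving $\mclh(T_n,f_n)$, pass to a weak limit $\mu$, verify separately that $\mu$ is $(T_*,f_*)$-maximizing, and then transfer $\mu_n$ through $H_n^{-1}$ to invoke upper semi-continuity of $h_{T_*}$. The paper instead transfers the \emph{whole optimization problem} at once: setting $\tilde f_n:=f_n\circ H_n$, conjugacy gives $\mclh(T_n,f_n)=\mclh(T_*,\tilde f_n)$ immediately, and since $\tilde f_n\to f_*$ uniformly, the result follows from the special case of upper semi-continuity of $f\mapsto\mclh(T_*,f)$ with $T_*$ fixed (which is the easy part, attributed to \cite{Bre08}). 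This packaging avoids your Step~3 entirely---there is no need to argue that the limit measure is maximizing, nor to juggle $\beta(T_n,f_n)$---so the paper's argument is a few lines shorter. But your route is equally valid and arguably more transparent about what is happening at the level of measures.
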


%\begin{rmk}
%  The proposition above should be known before.
%\end{rmk}

\begin{proof}
%  We only need to consider the $\mclm_{max}$ case, because $\mclm_{min}(f)=\mclm_{max}(-f)$.
It was observed in \cite{Bre08} that for any fixed $T\in\mcle^1$, $f\mapsto \mclh(T,f)$ is upper semi-continuous on $C^0(\mathbb{R}/\mathbb{Z})$. Indeed, if $\mu_n$ is a maximizing measure for $(T, f_n)$ with $f_n\to f$ in $C^0(\mathbb{R}/\mathbb{Z})$, then any accumulation point $\mu$ of $\mu_n$ in the weak-* topology is a maximizing measure for $(T,f)$. So the result is a consequence of upper semi-continuity of the entropy map $\nu\mapsto h_T(\nu)$.
%Suppose $\lim_{n\to\infty}f_n=f$ in $C^0(\mbbt)$. It suffices to show that $\limsup_{n\to\infty}\mclh(T,f_n)\le \mclh(T,f)$. For each $n$, choose an %arbitrary $\mu_n\in\mclm_{max}(T,f_n)$ with $\mclh(T,f_n)=h_{\mu_n}(T)$, whose existence follows from the upper semi-continuity of $\nu\mapsto %h_{\nu}(T)$ on $\mclm(T)$ (endowed with the usual weak-* topology) and compactness of $\mclm_{max}(T)\subset \mclm(T)$. By compactness, by choosing %subsequence if necessary, we may assume that $\mu_n\to\mu$ in $\mclm(T)$. Then the conclusion follows from the following two simple facts.
%  \begin{itemize}
%    \item $\mu\in \mclm_{max}(T,f)$. To see this, given $\nu\in \mclm(T)$, for each $n$, since $\int f_n\dif\nu\le \int f_n\dif\mu_n$, the following %holds:
%    \[
%    \int f\dif\nu  \le  \int (f-f_n)\dif\nu +\int (f_n-f)\dif\mu_n +\int f\dif\mu_n.
%    \]
%Letting $n\to \infty$, the first two terms on RHS are vanishing and the last one converges to  $\int f\dif\mu$.
%    \item $h_\mu(T)\ge\limsup_{n\to\infty}h_{\mu_n}(T)$. This is simply due to the upper semi-continuity of $\nu\mapsto h_{\nu}(T)$ on $\mclm(T)$.
%  \end{itemize}

%It remains to show that $\mclh$ is upper semi-continuous on $\mcle^1\times C^0(\mbbt)$.
%To this end, suppose
Now suppose that $(T_n,g_n)\to (T,f)$ in $\mcle^1\times C^0(\mathbb{R}/\mathbb{Z})$.
By \cite[Lemma 2]{JM08},
%and we have to show that $\limsup_{n\to\infty}\mclh(T_n,g_n)\le \mclh(T,f)$. To apply the result in the last paragraph, recall the following %well-known facts. Since $\lim_{n\to\infty}T_n=T$ in $\mcle^1$,
for each $n$ sufficiently large, there exists a homeomorphism $h_n:\mathbb{R}/\mathbb{Z}\to\mathbb{R}/\mathbb{Z}$ such that $T=h_n^{-1}\circ T_n\circ h_n$, and moreover, $\lim_{n\to\infty}\max_{x\in\mathbb{R}/\mathbb{Z} }d_{\mathbb{R}/\mathbb{Z}}(h_n(x),x)=0$, where $d_{\mathbb{R}/\mathbb{Z}}$ is the standard metric on $\mathbb{R}/\mathbb{Z}$.
%Since $T_n$ and $T$ are topologically conjugate via $h_n$,
Put $f_n:=g_n\circ h_n\in C^0(\mathbb{R}/\mathbb{Z})$. Then $\mclh(T,f_n)=\mclh(T_n,g_n)$ holds and $f_n\to f$ in $C^0(\mathbb{R}/\mathbb{Z})$.  Thus
\[
\limsup_{n\to\infty} \mclh(T_n,g_n) = \limsup_{n\to\infty} \mclh(T,f_n)\le \mclh(T,f).
\]
\end{proof}

\begin{proof}[Proof of Theorem~\ref{thm:Cr}]
We only need to consider the maximizing case. Let $T\in \mcle^\omega(\mathbb{R}/\mathbb{Z})$ and let $r\in\{0,1,2,\cdots,\infty\}$. Since the inclusion map from $C^r(\mathbb{R}/\mathbb{Z})$ to $C^0(\mathbb{R}/\mathbb{Z})$ is continuous, according to Proposition~\ref{prop:usc}, the function $f\mapsto \mclh(T,f)$ defined on $C^r(\mathbb{R}/\mathbb{Z})$ is upper semi-continuous. Therefore, $C_{ze}^r:=\{f\in C^r(\mathbb{R}/\mathbb{Z}): \mclh(T,f)=0\}$ is a $G_\delta$ subset of $C^r(\mathbb{R}/\mathbb{Z})$. On the other hand, by our Main Theorem, $C_{ze}^r\supset C_{ncc}^\omega$, where  $C_{ncc}^\omega$ denote the collection of functions in $C^\omega(\mathbb{R}/\mathbb{Z})$ that are not analytically cohomologous to constant. To complete the proof, it remains to show that $C_{ncc}^\omega$ is  dense in $C^r(\mathbb{R}/\mathbb{Z})$. To this end, let $C_{wcb}^r=\{f\in C^r(\mathbb{R}/\mathbb{Z}):\int f\dif\mu~\text{is independent of }\mu\in \mclm(T)\}$. Clearly, $C_{wcb}^r$ is a closed subset of $C^r(\mathbb{R}/\mathbb{Z})$ with empty interior, and $C^\omega(\mathbb{R}/\mathbb{Z})\setminus C_{wcb}^r\subset C_{ncc}^\omega$.
Since $C^\omega(\mathbb{R}/\mathbb{Z})$ is dense in $C^r(\mathbb{R}/\mathbb{Z})$, it follows that $C_{ncc}^\omega$ dense in $C^r(\mathbb{R}/\mathbb{Z})$, which completes the proof.
\end{proof}

We shall need the following well-known result for the proof of Theorems~\ref{thm:analytic Lyp} and ~\ref{thm:Cr Lyp}. See, for example, \cite[Proposition 28]{CLT01} for a more comprehensive version of this result under the $C^{1+\alpha}$ (and orientation-preserving) setting.

\begin{prop}\label{prop:Crcoho}
For each $T\in \mcle^r$, $r\in\{1,2,\ldots, \infty,\omega\}$, the following are equivalent:
\begin{enumerate}
\item [(i)] $\log |T'|$ is $C^0$ cohomologous to constant with respect to $T$;
\item [(ii)] $T$ is $C^r$ conjugate to the linear map $x\mapsto \deg(T) \cdot x$.
\end{enumerate}
\end{prop}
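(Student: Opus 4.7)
\textbf{Plan for Proposition~\ref{prop:Crcoho}.} The implication (ii) $\Rightarrow$ (i) is immediate. If $\phi\circ T = m_d \circ \phi$ where $m_d(x)=\deg(T)\cdot x\bmod 1$, then differentiating on a lift gives
\[
|T'(x)| = d\cdot \frac{|\phi'(x)|}{|\phi'(T(x))|}, \qquad d:=|\deg(T)|,
\]
so $\log|T'| = \log d + g\circ T - g$ with $g=-\log|\phi'|\in C^{r-1}\subset C^0$.

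For the nontrivial direction (i) $\Rightarrow$ (ii), suppose $\log|T'|=g\circ T-g+c$ with $g\in C^0(\mathbb{R}/\mathbb{Z})$. The first task is to identify $c$. Integrating against an arbitrary $T$-invariant probability measure $\mu$ yields $\int\log|T'|\,d\mu=c$. Applied to the measure of maximal entropy, the Margulis--Ruelle inequality gives $\log d = h_{\mathrm{top}}(T) = h_{\mu_{\max}}(T)\le c$. Applied to the (unique) absolutely continuous invariant measure $\mu_{\mathrm{ac}}$, Rokhlin's entropy formula gives $c=h_{\mu_{\mathrm{ac}}}(T)\le h_{\mathrm{top}}(T)=\log d$. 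Hence $c=\log d$. Next I would invoke a Liv\v{s}ic-type regularity theorem: since $T\in\mcle^r$, the function $\log|T'|$ lies in $C^{r-1}$ (or $C^\omega$ when $r=\omega$), and the continuous coboundary $g$ associated to a $C^{r-1}$ (resp. $C^\omega$) function over a $C^r$ expanding map is itself automatically $C^{r-1}$ (resp. $C^\omega$), which suffices to produce a $C^r$ primitive below.

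With $g$ of the required regularity, set $Z=\int_0^1 e^{-g(t)}\,dt$ and define $\phi$ as the circle diffeomorphism obtained by descending $x\mapsto Z^{-1}\int_0^x e^{-g(t)}\,dt$ from $\mathbb{R}$ to $\mathbb{R}/\mathbb{Z}$; then $\phi$ is a $C^r$ orientation-preserving diffeomorphism with $\phi'=Z^{-1}e^{-g}$. In the orientation-preserving case, using $|T'(x)|=d\,e^{g(T(x))-g(x)}$,
\[
(\phi\circ T)'(x)=\phi'(T(x))\,T'(x)=Z^{-1}e^{-g(T(x))}\cdot d\,e^{g(T(x))-g(x)}=d\,\phi'(x),
\]
so on any lift $\widehat{\phi}\circ\widehat{T}-d\,\widehat{\phi}$ is constant; choosing the lift of $\phi$ so that this constant is an integer (using, e.g., a fixed point of $T$) gives $\phi\circ T=m_d\circ\phi$. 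The orientation-reversing case is analogous, using an orientation-reversing $\phi$ and $m_{-d}$.

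The main obstacle I expect is the Liv\v{s}ic regularity step, particularly uniformly across $r\in\{1,2,\ldots,\infty,\omega\}$: the standard statements are cleanest in the $C^{1+\alpha}$ and $C^\omega$ settings, while the pure $C^1$ endpoint may require either a slightly weaker regularity conclusion (still sufficient to construct a $C^1$ $\phi$) or an alternative route that avoids Rokhlin's formula, e.g., by noting that every $T^n$-fixed point $p$ satisfies $|(T^n)'(p)|=e^{nc}$ and matching the periodic-point growth $|\mathrm{Fix}(T^n)|\sim d^n$ to force $c=\log d$, after which one reconstructs the coboundary directly from periodic data. Since a detailed version under the $C^{1+\alpha}$ hypothesis is already given in \cite[Proposition~28]{CLT01}, I would keep the write-up short and refer the reader there for the technical regularity details.
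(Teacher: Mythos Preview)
Your outline is essentially correct, but the paper's argument is considerably more elementary and sidesteps exactly the two places where you anticipate difficulties.

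First, for identifying the constant: instead of invoking Margulis--Ruelle and Rokhlin's formula (which, as you note, is delicate at $r=1$), the paper simply builds $\phi$ first and then reads off the constant. From $\log|T'|=g\circ T-g+\text{const}$ one sets $\psi=e^{-g}/Z$ (so $\psi>0$, $\int\psi=1$) and takes $\phi$ with $\phi'=\psi$, exactly as you do. The cohomology equation then becomes $(\phi\circ T)'=c\,\phi'$ for some $c\ne 0$; integrating both sides over $\mathbb{R}/\mathbb{Z}$ gives $c=\deg(T)$ immediately, with no entropy theory and no case distinction on orientation. This already yields the $C^1$ conjugacy, so the $r=1$ case is finished with nothing further.

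Second, for the regularity upgrade when $r\ge 2$: rather than appealing to Liv\v{s}ic regularity for the coboundary $g$, the paper observes that the equation $\psi\circ T\cdot T'=\deg(T)\cdot\psi$ says precisely that $\psi\,dx$ is $T$-invariant, so $\psi$ is the density of the unique absolutely continuous invariant probability measure. For $T\in\mcle^r$ with $r\in\{2,3,\ldots,\infty,\omega\}$ this density is $C^{r-1}$ by standard transfer-operator theory, hence $\phi\in C^r$. This is of course closely related to Liv\v{s}ic regularity, but packaging it via the ACIP makes the result uniform in $r$ and avoids the endpoint issues you flag.

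In short: your route works but leans on heavier machinery (entropy formulas, Liv\v{s}ic bootstrapping) precisely where the paper gets by with one-line calculus (integrate the derivative identity) and a single classical fact (smoothness of the ACIP density).
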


\begin{proof} The implication (ii) $\implies$ (i) is trivial. Let us show (i) $\implies$ (ii). Since $\log |T'|$ is $C^0$ cohomologous to constant, there exist $\psi\in C^0(\mathbb{R}/\mathbb{Z})$ and  $c\in \mathbb{R}\setminus\{0\}$ such that the following hold:
\begin{itemize}
  \item  $\psi>0$ and $\int_{\mathbb{R}/\mathbb{Z}}\psi(x)\dif x=1$;
  \item  $\psi\circ T\cdot T'=c\cdot\psi$.
\end{itemize}
The first item above implies that there exists a $C^1$-diffeomorphism $\phi:\mathbb{R}/\mathbb{Z}\to \mathbb{R}/\mathbb{Z}$ with $\phi'=\psi$. Then the second item above can be rewritten as $(\phi\circ T)'=c\cdot\phi'$. Integrating both sides over  $\mathbb{R}/\mathbb{Z}$ yields that $\deg(T)=c$. Therefore, $T$ is $C^1$ conjugate to the linear map $x\mapsto \deg(T) \cdot x$ via $\phi$.

It remains to show that $\phi$ is $C^r$ when $T\in\mcle^r$ for $r\in \{2,3,\ldots, \infty,\omega\}$. In this situation, $T$ admits a unique absolutely continuous invariant probability measure $\mu$, which has $C^{r-1}$ density. On the other hand, $\phi_*\mu$ is the unique absolutely continuous  invariant probability measure of $x\mapsto \deg(T) \cdot x$, which is exactly the standard Lebesgue measure. Thus $\phi$ is also $C^r$.
\end{proof}

\begin{proof}[Proof of Theorem~\ref{thm:analytic Lyp}] Let us only consider the maximizing case, i.e. $f=\log |T'|$, as the minimizing case is similar.
By the Main Theorem, either $f$ is analytically cohomologous to constant, or any Lyapunov maximizing measure has zero entropy. If the first case happens, then by Proposition~\ref{prop:Crcoho}, $T$ is $C^\omega$ conjugate to $x\mapsto \deg(T)\cdot x$.
\end{proof}
%The proof of Theorem~\ref{thm:Cr Lyp} is totally similar.

\begin{proof}[Proof of Theorem~\ref{thm:Cr Lyp}]
We only need to consider the maximizing case.  Let $r\in\{1,2,\cdots,\infty\}$. Since the map $T\mapsto (T,\log|T'|)$ from $\mcle^r$ to $\mcle^1\times C^0(\mathbb{R}/\mathbb{Z})$ is continuous, according to Proposition~\ref{prop:usc}, the function $T\mapsto \mclh(T,\log|T'|)$ defined on $\mcle^r$ is upper semi-continuous. Therefore, $\mcle_{ze}^r:=\{f\in \mcle^r: \mclh(T,\log|T'|)=0\}$ is a $G_\delta$ subset of $\mcle^r$. On the other hand, by Theorem~\ref{thm:analytic Lyp}, $\mcle_{ze}^r\supset \mcle_{ncc}^\omega$, where  $\mcle_{ncc}^\omega$ denote the collection of maps in $\mcle^\omega$ that are not analytically conjugate to linear map. To complete the proof, it remains to show that $\mcle_{ncc}^\omega$ is dense $\mcle^r$. To this end, let $\mcle_{wcb}^r=\{T\in \mcle^r:\int \log|T'|\dif\mu~\text{is independent of }\mu\in \mclm(T)\}$. Clearly, $\mcle_{wcb}^r$ is a closed subset of $\mcle^r$ with empty interior and $\mcle^\omega\setminus \mcle_{wcb}^r\subset \mcle_{ncc}^\omega$.
%and it is easy to see that  (\textcolor{red}{is it necessary to add some details here?}).
Since $\mcle^\omega$ is dense in $\mcle^r$, it follows that $\mcle_{ncc}^\omega$ dense in $\mcle^r$, which completes the proof.
\end{proof}

\bibliographystyle{plain}             %    other options: alpha, plain, abbrv ...

\bibliography{refer}

\end{document}